\documentclass[11pt, a4paper]{article}

\usepackage{inputenc} 
\usepackage[english]{babel}
\usepackage{dsfont}

\usepackage[totalheight=24 true cm, totalwidth=17 true cm]{geometry}
\usepackage{enumitem}

\setlist[itemize]{noitemsep}

\usepackage{bbding}
\usepackage{amsmath,multirow}
\usepackage{amsthm}
\usepackage{amssymb}
\usepackage{mathrsfs}
\usepackage{mathtools}
\usepackage{ stmaryrd }
\usepackage{url}
\usepackage{wrapfig}
\usepackage{starfont}
\usepackage{pifont}
\usepackage{eurosym}
\usepackage{subcaption}

\usepackage{algorithm}
\usepackage{algpseudocode}

\usepackage{multirow,booktabs,array,arydshln}

\usepackage{imakeidx}
\makeindex[]

\usepackage{multicol}
\usepackage{relsize}
\usepackage{float}
\usepackage{tikz,pgf}
\usepackage{tikz-cd}
\usepackage{wasysym}
\usepackage{graphicx}
\usepackage{fancyhdr}
\usepackage{verbatim}

\usepackage{pgfplots}
\pgfplotsset{compat=1.15}
\usepackage{mathrsfs}
\usetikzlibrary{arrows}

\usepackage[all,dvips,knot,web,arc,curve,color,frame]{xy}
\usepackage[all,knot,arc,color,web]{xy}
\xyoption{arc}
\xyoption{web}
\xyoption{curve}

\numberwithin{equation}{section}

\theoremstyle{plain}
\newtheorem{theorem}{Theorem}[section]
\newtheorem{proposition}[theorem]{Proposition}
\newtheorem{corollary}[theorem]{Corollary}

\theoremstyle{definition}
\newtheorem{definition}[theorem]{Definition}

\newtheorem{remark}[theorem]{Remark}

\newtheorem{example}[theorem]{Example}

\usepackage[bookmarksnumbered=true]{hyperref} 
\hypersetup{
     colorlinks = true,
     linkcolor = blue,
     anchorcolor = blue,
     citecolor = teal,
     filecolor = blue,
     urlcolor = blue
     }
\frenchspacing

\newcommand\restr[2]{{
  \left.\kern-\nulldelimiterspace 
  #1 
  \vphantom{\big|} 
  \right|_{#2} 
  }}

\everymath{\displaystyle}
\allowdisplaybreaks

\makeatletter
\def\mathcenterto#1#2{\mathclap{\phantom{#1}\mathclap{#2}}\phantom{#1}}
\let\old@widetilde\widetilde
\def\widetildeto#1#2{\mathcenterto{#2}{\old@widetilde{\mathcenterto{#1}{#2\,}}}}
\let\old@widehat\widehat
\def\widehatto#1#2{\mathcenterto{#2}{\old@widehat{\mathcenterto{#1}{#2\,}}}}
\makeatother




 %

\usepackage{tikz,caption}

\usepackage{verbatim}
\usetikzlibrary{arrows,shapes}
\usetikzlibrary{positioning}

\def\kb{{\mathbf k}}
\def\xb{{\mathbf x}}

\def\Xk{{\mathcal X}}

\def\Fc{{\mathcal F}}
\def\Bc{{\mathcal B}}

\def\p{{\mathbf p}}
\def\x{{\mathbf x}}
\def\R{{\mathbf R}}

\def\cls{{\rm cls}}

\def\md{{\rm md}}

\def\mult{{\rm mult}}

\hypersetup{
    colorlinks=true,
    linkcolor=blue,
    filecolor=magenta,      
    urlcolor=cyan,
    pdftitle={Overleaf Example},
    pdfpagemode=FullScreen,
    }
    
\urlstyle{same}

\setlist[itemize]{noitemsep}

\normalfont

\usepackage{bbm}
\usepackage{dsfont}

\usepackage{url}
\usepackage{amsmath,blkarray,booktabs, bigstrut}
\usepackage{tikz}
\usepackage{multirow}
\usepackage{xcolor}
\usepackage{color}
\binoppenalty=10000 
\relpenalty=10000
\pretolerance=10000

\usepackage[maxbibnames=99]{biblatex}
\addbibresource{bibliography.bib}

\title{Stable Coherent Systems}



\author{Rodrigo Iglesias         \and
        Fatemeh Mohammadi \and
        Patricia Pascual-Ortigosa \and
        Eduardo S\'aenz-de-Cabez\'on \and
        Henry P. Wynn}

\begin{document}
\maketitle

\begin{abstract}
We describe the notion of stability of coherent systems as a framework to deal with redundancy. We define stable coherent systems and show how this notion can help the design of reliable systems. We demonstrate that the reliability of stable systems can be efficiently computed using the algebraic versions of improved inclusion-exclusion formulas and sum of disjoint products.
\end{abstract}

{\hypersetup{linkcolor=black}
{\tableofcontents}}

\vspace{-2cm}
\section*{Special notations}
\begin{center}
\begin{minipage}{9cm}
\begin{enumerate}
\item[$S$] A coherent system
\item[$\{c_1,\dots,c_n\}$] The set of components of system $S$.
\item[$s=(s_1\dots,s_n)$] Tuple indicating the states of the components of a system.
\item[$s^{i}_-$] Tuple with the same values as $s$ except $s_i=s_i-1$.
\item[$s^{i}_+$] Tuple with the same values as $s$ except $s_i=s_i+1$.
\item[$s^{(i,j)}_{(-,+)}$] Tuple with the same values as $s$ except $s_i=s_i-1$ and $s_j=s_j+1$.
\item[$\p$] Vector of working probabilities of the components of a binary system.
\item[$pr(s)$] $pr(c_1\geq s_1 \wedge c_2\geq s_2\wedge \cdots \wedge c_n\geq s_n)$.
\item[$p_{i,j}$] The probability $pr(c_i\geq j)$ that component $c_i$ is performing at least at level $j$, that is $pr(s)$ for $s=(0,\ldots,j,\ldots,0)$. 
\item[$pr(x^s)$] $pr(s)$.
\item[$R_l(S)$] The $l$-reliability of system $S$, i.e.  probability that the system $S$ is performing at level $\ge l$.
\item[$R_\p(S)$] The reliability of the binary system $S$ using $\p$ as the probability vector of its components.
\item[ $\xb^{(\overline{i,j})}$] The vector of states $(x_1,\dots,x_{i-1},0,x_{i+1},\ldots,x_{j-1},0,x_{j+1},\ldots,x_n)$.
\item[$I_{S,l}$] The $l$-reliability ideal of system $S$.
\item[$HN_{I_{S,l}}$] Numerator of the Hilbert series of $I_{S,l}$.
\item[$\Xk_{L,\mathcal{N}}(\nu)$] Set of multiplicative indices of $\nu$ with respect to the involutive division $L$ and set $\mathcal{N}$.
\item[$\overline{\Xk}_{L,\mathcal{N}}(\nu)$] Set of non-multiplicative indices of $\nu$ with respect to the involutive division $L$ and set $\mathcal{N}$.

\end{enumerate}
\end{minipage}
\end{center}

\section{Introduction}
With the increase in complexity and external risk to modern systems the role of backup has increased its importance. Examples are essential databases in finance and many commercial fields, power and communication networks, medical supplies and supply chains more generally. But backing up is expensive and for this reason there is a need to develop measures of value of backup and standby in order to design more reliable systems. On the other hand, redundancy is one of the driving forces in the design of coherent systems. The balance between redundancy and cost optimization is a main criterion for the reliability-based design of coherent systems, e.g. a series system is cheap but not redundant/reliable, while a parallel system is on the contrary very redundant, but also very demanding in terms of resources.  In between are, for example, series-parallel systems and $k$-out-of-$n$:G systems, where G is for {\em good}, which work whenever at least $k$ of its $n$ components are working (as opposed to $k$-out-of-$n$:F systems, where F is for {\em failure}, which fail whenever at least $k$ of their $n$ components fail). Unless otherwise stated we will always consider $k$-out-of-$n$:G systems and denote them simply as $k$-out-of-$n$. 

In this paper we study systems, which we will call {\em stable}, that have good backup features, and propose stable systems as a kind of systems that share some properties with the usual $k$-out-of-$n$ model. In particular, stable systems generalize $k$-out-of-$n$ systems' notion of redundancy. The idea behind stable systems is that the improvement of some components can compensate the degradation of others. We use the term {\em backup} to describe this process.
The seminal paper of Birnbaum \cite{birnbaum1968some} introduced the idea of importance measures of a component in a system in terms of the sensitivity of the failure of the whole system to failure (or removal) of the component.  They are sometimes called “fault indices” or “criticality indices”. Kuo and Zhu,
in \cite{kuo2012importance} give a comprehensive review of such measures.

We consider classical ideas of coherency and redundancy combined with the recent area of {\em algebraic reliability}, with which the authors have been involved for a number of years.  At a general level the ambition of the paper is that algebraic reliability can be used to help meet the demand for theoretical background for maintenance aspects of reliability and cost vs. benefit analysis.
The outline of the paper is as follows. In Section \ref{sec:stableSystems} we give the main definitions of stable systems. We study design issues and some importance measures for these systems in Section \ref{sec:importanceMeasures}. In the rest of the paper we perform an algebraic analysis of the reliability of stable systems. For this, we first present a brief review of the algebraic method in Section \ref{sec:algebraic} and apply it to stable systems in Section \ref{sec:algebraicStable}. Finally, in Section \ref{sec:experiments} we present the results of some computational experiments and simulations that demonstrate the efficiency of the algebraic methods when dealing with stable systems.

\section{Fully stable, strongly stable and stable coherent systems}\label{sec:stableSystems}

A system $S$ consists of a set of $n$ components and a structure function. The system's levels of performance are given by a discrete set $L=\{0,1,\dots, M\}$ indicating growing levels of performance, the system being in level $0$ indicates that the system is failing, and level $j>i$ indicates that the system is performing at level $j$ better than at level $i$.   We denote the system's components by $c_i$ with $i\in\{1,\dots,n\}$. Each of the individual components $c_i$ can be in one of a discrete set of levels $L_i=\{0,\dots,M_i\}$, also called states. We say that a state of the system is the $n$-tuple of its components' states at a particular moment in time. Given two system states $s=(s_1,\dots,s_n)$ and $t=(t_1,\dots,t_n)$, we say that $s\geq t$ if $s_i\geq t_i$ for all $i=1,\dots,n$ and that $s\leq t$ if $s_i\leq t_i$ for all $i$. For ease of notation, states of the systems can be represented in monomial form, in which a state $s=(s_1,\dots,s_n)$ is represented by the monomial $c_1^{s_1}\cdots c_n^{s_n}$. The structure function of the system, denoted by $\Phi$, describes the level of performance of the system in terms of the states of its components, i.e. $\Phi:L_1\times \cdots \times L_n\longrightarrow L$. The system $S$ is {\em monotone} if $\Phi$ is non-decreasing; if in addition each component is relevant to the system then the system is said to be {\em coherent}. A component is said to be relevant  for system $(S,\Phi)$ if its status (level of performance) does affect the system state. In this paper we consider all systems to be coherent, although for all our results it is enough that $S$ is monotone. A system $S$ with a structure function $\Phi$ will be denoted by $(S,\Phi)$; if the structure function is clear from the context we will simply refer to the system as $S$.

The description of the $l$-th level of a system is given by its set of $l$-working states (also called $l$-paths), i.e. those tuples $(s_1,\dots,s_n) \in L_1\times\cdots\times L_n$ such that $\Phi(s_1,\dots,s_n)\geq l$. A state $(s_1,\dots,s_n)\in L_1\times\cdots\times L_n$ is a {\em minimal $l$-working state}, also called {\em minimal $l$-path}, if  $\Phi(s_1,\dots,s_n)\geq l$ and $\Phi(t_1,\dots,t_n)< l$ whenever all $t_i\leq s_i$ and the inequality is strict in at least in one case. A state $(s_1,\dots,s_n)\in L_1\times\cdots\times L_n$ is a {\em minimal $l$-failure state} or {\em minimal $l$-cut} if  $\Phi(s_1,\dots,s_n)< l$ and $\Phi(t_1,\dots,t_n)\geq l$ whenever all $t_i\geq s_i$ and at least one of the inequalities is strict. We denote by $\Fc_l(S,\Phi)$ the set of $l$-paths of $S$ with respect to the structure function $\Phi$; the set of minimal $l$-paths will be denoted by $\overline{\Fc}_l(S,\Phi)$.  If the structure function is clear from the context, we simply write $\Fc_l(S)$ and $\overline{\Fc}_l(S)$.

For a component $c_i$ and a level $j\in L_i$,
we denote by $p_{i,j}$ the probability that  $c_i$ is performing at least at
level $j$, i.e. $p_{i,j}=pr(c_i\geq j)$; this probability is also called {\em $j$-reliability} of the component. Given a tuple $s=(s_1,\dots,s_n) \in L_1\times\cdots\times L_n$ we denote by $pr(s)$ the probability of the conjunction $(c_1\geq s_1)\wedge\cdots \wedge(c_n\geq s_n)$. In the case of binary components the usual notation is $p_i=pr(c_i=1)$ and $q_i=1-p_i=pr(c_i=0)$. 
Under the assumption of independent components, $pr(s)=\prod_{1\leq i\leq n}p_{i,s_i}$.
The $l$-reliability of $S$, denoted by $R_l(S)$, is the probability that the system $S$ is performing at level $\ge l$; conversely, the $l$-unreliability of $S$, denoted $U_l(S)$, is $1-R_l(S)$. The $l$-reliability of $S$ can be expressed as the probability of the disjunction
\[
R_l(S)=pr\left( \bigvee_{s\in{\Fc}_l(S)}(c_1\geq s_1)\wedge\cdots \wedge(c_n\geq s_n)\right),
\]
i.e. the probability that the system is in a state that is bigger than at least one $l$-path. In binary systems and components, the $1$-reliability of component $i$ is simply called {\em reliability} and is denoted by $p_i$; and the $1$-reliability of the system is simply called {\em reliability} of the system, and denoted by $R(S)$.

\begin{definition}
We say that a system $S$ with structure function $\Phi$ is \emph{fully stable} for level $l$ if for any $l$-path $m\in\Fc_l(S,\Phi)$, $m=(m_1,\dots,m_n)$ and any component $i \in \{1,\dots,n\}$ we have that $m^{(i,j)}_{(-,+)}=(m_1,\dots,m_i-1,\dots,m_j+1,\dots,m_n)$ is also an $l$-path of $S$ under the structure function $\Phi$ for any $i\neq j\in\{1,\dots,n\}$.
\end{definition}

In other words, a system is said to be fully stable if whenever the system is in an $l$-working state and any component suffers a one-level degradation, the system can be kept performing at level $l$ by the one-level improvement of any other component. Any component serves as a backup for any other.  In the binary case it is easy to see that the only fully stable systems are $k$-out-of-$n$ systems. Although stability is a property of the structure function, for simplicity we also consider it as a property of the system. We do the same in the next definitions.

In this paper we will focus on the next two definitions, which relax the conditions of fully stable systems to describe less demanding versions of stability. For them, we need to set an ordering $\tau$ of the components. Unless otherwise stated we assume that  $\tau$ is the usual ordering $1<2<\cdots<n$.

\begin{definition}
We say that a system $S$ with structure function $\Phi$ is \emph{strongly stable} for level $l$ if there exists an ordering of the components $\tau$ such that for any $l$-path $m\in\Fc_l(S,\Phi)$, $m=(m_1,\dots,m_n)$ and any component $i \in \{1,\dots,n\}$ we have that $m^{(i,j)}_{(-,+)}=(m_1,\dots,m_j+1,\dots,m_i-1,\dots,m_n)$ is also an $l$-path of $S$ under the structure function $\Phi$ for any $i >_{\tau} j\in\{1,\dots,n\}$. We say that $S$ is strongly stable if it is strongly stable for all its levels.
\end{definition}

\begin{definition}
We say that a system $S$ with structure function $\Phi$ is \emph{stable} or \emph{simply stable} for level $l$ if there exists an ordering of the components $\tau$ such that for any $l$-path $m\in\Fc_l(S,\Phi)$, $m=(m_1,\dots,m_i,0,\dots,0)$, such that $i$ is the last (with respect to $\tau$) working component (i.e. not in total failure) of $m$, we have that $m^{(i,j)}_{(-,+)}=(m_1,\dots,m_j+1,\dots,m_i-1,0,\dots,0)$ is also an $l$-path of $S$ under the structure function $\Phi$ for any $i >_{\tau} j\in\{1,\dots,n\}$. We say that $S$ is stable if it is stable for all its levels.
\end{definition}

In strongly stable systems, any component can be backed up by any other component with a smaller index with respect to the ordering $\tau$. For stable systems we only demand that within each working path, the last component of the path is backed up by the rest of the components with smaller indices. In this paper, we will usually consider strongly stable systems.

\begin{remark}
For binary systems we need to introduce a subtle modification in these definitions. In both the strongly stable and simply stable case, the backup components must be in a failure state. If a component is already in its working state, it cannot be further improved to serve as a backup for a failed component. 
\end{remark}


Let $S$ be a coherent system with $n$ components and let $\Phi$ be its structure function. Consider a fixed ordering of the components of $S$. We say that a structure function $\Psi$ on $S$ dominates $\Phi$ at level $l$ if every $l$-path of $S$ with respect to $\Phi$ is also an $l$-path with respect to $\Psi$. We define the {\em stable closure} of $\Phi$ for level $l$ as a structure function $\Psi$ such that it dominates $\Phi$ at level $l$, is stable and such that any other stable structure function $\Psi'$ that dominates $\Phi$ also dominates $\Psi$ at level $l$. In the same way we define strongly stable and fully stable closures. Observe that the fully stable closure of a binary system $S$ is the $k$-out-of-$n$:G system, where $k$ is the minimal length of any minimal path of $S$.

\begin{example}
Let $n=5$ and let $S$ be a binary coherent system with five components whose set of minimal paths with respect to the structure function $\Phi$ is $\overline{\Fc}(S,\Phi)=\{c_1c_4,c_2c_3c_4,c_2c_4c_5\}$.

The function $\Psi$ such that the set of minimal paths for $S$ is 
\[
\overline{\Fc}(S,\Psi)=\{c_1c_2,c_1c_3, c_1c_4,c_2c_3c_4,c_2c_3c_5,c_2c_4c_5\}
\]
is the strongly stable closure of $(S,\Phi)$.

The function $\Psi'$ such that the set of minimal paths for $S$ is 
\[
\overline{\Fc}(S,\Psi')=\{c_1c_2,c_1c_3,c_1c_4,c_2c_3c_4,c_2c_4c_5\}
\]
is the stable closure of $(S,\Phi)$.
\end{example}

In Section~\ref{sec:algebraicStable}, we describe an algorithm to obtain the stable and strongly stable closure of a system with a given structure function.

\section{Design of stable systems based on components' importance}\label{sec:importanceMeasures}
The fact that the definitions of stability and strong stability depend on the ordering of the components raises the issue of how stable orderings relate to other features of the system, in particular to importance measures of its components. 

In binary and multi-state systems, importance measures are used to calculate the relative importance of their components for the overall performance of the system, cf.  \cite{KZ12}. The role of these measures is manifold, in particular they provide a ranking of the components with respect to their influence on the system's reliability and help to focus on the top contributors to system reliability and unreliability, and on improvements with the greatest reliability effect. Two main properties of the system are considered to analyze the importance of each of the components: its structure and its reliability. Correspondingly, we study the positions of each of the components in the system's structure, and their contribution to the system's reliability. Importance measures based on the position of the components in the systems are called {\em structural} importance measures, while those taking into account the reliability of the system are called {\em reliability} importance measures. In applied reliability studies of complex systems, one of the most time-consuming tasks is to find good estimates for the failure and repair rates. In systems with a big number of components one may start with coarse estimates, calculate measures of importance for the various components, such as Birnbaum’s or structural measures, and spend most of the time finding higher-quality data for the most important components. Components with a very low value of Birnbaum’s or other importance measures will have a small effect in the system reliability, and extra efforts finding higher quality data for such components may be considered wasted. The main importance measures have been developed and studied for binary systems \cite{KZ12} but there exist certain generalizations and extensions to multi-state systems. These generalizations can be classified as those that measure the contribution of a component to the performance of the system and those that measure which states of a given component are more important to the performance of the system \cite{RC05,RRGCT06}.

Birnbaum importance or $B$-importance for binary systems \cite{B69} is reliability based, it considers the probability that a component is critical for the system. It can be used to define other importance measures and is often used for comparisons among importance measures. The acronym $B$-i.i.d importance refers to the cases in which all components of the system are {\em independent} and their failures are statistically {\em identically distributed}. It thus reflects the structural aspects of this importance measure. When one considers the $B$-i.i.d importance with $p=1/2$ one has the so called $B$-structural importance \cite{KZ03}. It is defined as
\[
I^S_i=\frac{1}{2^{n-1}}\sum_{s} [\Phi(1_i,s)-\Phi(0_i,s)],
\]
where the sum is over all states $s$ of the system, and $(1_i,s)$ indicates the tuple $s$ setting to $1$ its $i$-th component, and $(0_i,s)$ indicates that we set to $0$ the $i$-th value of $s$.

If the system $S$ is strongly stable with respect to some ordering $\tau$ of the variables, then $\tau$ sorts the variables decreasingly by their structural importance. We will see an algebraic proof of this fact in Section \ref{sec:algebraicImportance}. Using stability as a criterion for redundancy is therefore compatible with design based on structural importance.

Strongly stable systems have their components also ordered by {\em permutation importance}, a structural importance measure defined for binary systems in \cite{BPT89} and extended in \cite{M96} to the multi-state case. We prove here the result for multi-state systems, but the result applies verbatim to the binary case.

\begin{definition}
Component $i$ is more permutation important than component $j$ in a multi-state coherent system if 
\[
\Phi(x_1,\dots,\overset{i}{\mu},\dots,\overset{j}{\nu},\dots,x_n)\geq \Phi(x_1,\dots,\overset{i}{\nu},\dots,\overset{j}{\mu},\dots,x_n)
\]
 for all $0\leq\nu<\mu\leq \min(M_i,M_j)$ and all $\xb^{(i,j)}$, and strict inequality holds for some $\nu<\mu$ and $\xb^{(i,j)}$, where $M_i$ and $M_j$ denote the maximum performance level of components $i$ and $j$ respectively.
\end{definition}

\begin{proposition}\label{prop:stablePermutationImportance}
Let $S$ be a strongly stable multi-state system with respect to order $\tau$. Then component $i$ is at least as  permutation important as component $j$ whenever $i<_\tau j$.
\end{proposition}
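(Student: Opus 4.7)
The plan is to leverage the strong stability property iteratively to transfer units of performance from component $j$ to component $i$ one step at a time. Fix $i <_\tau j$, levels $0 \leq \nu < \mu \leq \min(M_i, M_j)$, and a background state $\xb^{(i,j)}$. I would consider the two tuples $s$ and $s'$ that agree with $\xb^{(i,j)}$ outside of positions $i, j$, and satisfy $s_i = \nu$, $s_j = \mu$ versus $s'_i = \mu$, $s'_j = \nu$. Setting $l := \Phi(s)$, the inequality demanded by the definition of permutation importance, namely $\Phi(s') \geq \Phi(s)$, reduces to showing that $s' \in \Fc_l(S, \Phi)$.

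To establish this, I would define for each $0 \leq k \leq \mu - \nu$ the interpolating tuple $s^{(k)}$ that agrees with $s$ outside positions $i, j$ and has $s^{(k)}_i = \nu + k$, $s^{(k)}_j = \mu - k$, and then induct on $k$. The base case $k=0$ is just $s \in \Fc_l(S,\Phi)$. For the induction step, I would note that $s^{(k+1)} = (s^{(k)})^{(j,i)}_{(-,+)}$, since it is obtained by decrementing position $j$ and incrementing position $i$. Because $i <_\tau j$ (equivalently $j >_\tau i$), the strong stability hypothesis applied to the pair $(j, i)$ at the $l$-path $s^{(k)}$ yields that $s^{(k+1)}$ is also an $l$-path, closing the induction. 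Taking $k = \mu - \nu$ gives $s' = s^{(\mu-\nu)} \in \Fc_l(S,\Phi)$, so $\Phi(s') \geq l = \Phi(s)$ as required.

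Since $\xb^{(i,j)}$ and $\nu < \mu$ were arbitrary, component $i$ satisfies the weak half of the permutation importance inequality over component $j$, which is exactly what ``at least as permutation important'' asks for. The main things to verify are bookkeeping: each intermediate tuple $s^{(k)}$ must be a valid state, i.e., $0 \leq \nu + k \leq M_i$ and $0 \leq \mu - k \leq M_j$ for all $0 \leq k \leq \mu - \nu$, which follows immediately from $0 \leq \nu < \mu \leq \min(M_i, M_j)$; and that strong stability is being quoted at the right level $l$ for every step (not only for $l = \Phi(s)$), which is fine because $S$ is strongly stable at every level. No subtle obstacle arises — the content of the proof is essentially that strong stability can be chained, with the argument amounting to a one-dimensional rebalancing between coordinates $i$ and $j$.
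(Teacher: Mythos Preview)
Your proof is correct and follows essentially the same approach as the paper: both fix $l=\Phi(s)$ and repeatedly apply the strong stability move $(j,i)_{(-,+)}$ to pass from $s$ to $s'$ through intermediate $l$-paths. Your version is simply a more explicit write-up of what the paper calls ``proceeding iteratively,'' with the bookkeeping on the range of the intermediate states spelled out.
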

\begin{proof}
Let $\nu<\mu$ and let $s=(s_1,\dots,\overset{i}{\nu},\dots,\overset{j}{\mu},\dots,s_n)$ an $l$-path of $S$. Since $S$ is strongly stable, we have that $s^{(j,i)}_{(-,+)}$ is also an $l$-path of $S$. We can proceed iteratively to degrade component $j$ while improving component $i$ and reach the state $s'=(s_1,\dots,\overset{i}{\mu},\dots,\overset{j}{\nu},\dots,s_n)$ which is still an $l$-path. In particular, we have that 
\[
\Phi(s_1,\dots,\overset{i}{\mu},\dots,\overset{j}{\nu},\dots,s_n)\geq\Phi(s_1,\dots,\overset{i}{\nu},\dots,\overset{j}{\mu},\dots,s_n),
\]
hence the permutation importance of component $i$ is greater than or equal to that of component $j$.
\end{proof}
\begin{remark}
Observe that the opposite to Proposition \ref{prop:stablePermutationImportance} does not always hold, i.e. if $i>_\tau j$ then we cannot claim that the permutation importance of component $i$ is bigger than that of component $j$. Consider for example a multi-state system with three components whose minimal $l$-paths are given by
\begin{align*}
\{ (2,0,0), (1,2,0), (1,1,2), (0,4,0),(0,3,1),
\\(0,2,2),(1,0,3),(0,1,3),(0,2,3),(0,0,4)\}
\end{align*}
for some $l$. The system is strongly stable for level $l$, and the permutation importance of component $1$ is strictly greater than that of component $3$. To see this, observe that for any $l$-path in which the state of component $3$, say $\mu$, is bigger than that of component  $1$, say $\nu$, we have that the vector in which the states of components $1$ and $3$ are interchanged is still a (possibly non-minimal) $l$-path of the system. However, for the state $(2,0,0)$ the state $(0,0,2)$ that results from interchanging the state of components $1$ and $3$ is not an $l$-path of the system, which implies that the permutation importance of component $3$ is less than that of component $1$.
\end{remark}

For binary systems, we have an analogous result that takes into account the reliability of each of the components. We can see that if the system is strongly stable, then the variables are sorted by their contribution to the reliability of the system. Let $\p=(p_1,\dots,p_n)$ denote the vector of working probabilities of the components of a binary coherent system $S$, where $p_i=pr(c_i>0)$. We denote by $R_\p(S)$ the reliability of $S$ using $\p$ as the probability vector of its components. 

\begin{proposition}
Let $S$ be a strongly stable binary system with respect to the usual ordering of the components $1<2<\cdots<n$. Let $j<i$ and $\p'=\{p_1,\dots,\overset{j}{p_i},\dots,\overset{i}{p_j},\dots,p_n\}$ (i.e. we interchange the working probabilities of components $i$ and $j$). Then, 
\[
p_i\geq p_j \implies R_{\p'}(S)\geq R_{\p}(S).
\]
\end{proposition}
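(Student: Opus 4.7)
The plan is to condition on the joint state of all components other than $i$ and $j$, and reduce the comparison of $R_{\p'}(S)$ and $R_{\p}(S)$ to an inequality about a single pair of switched configurations, which can then be discharged by the permutation importance result already established in Proposition~\ref{prop:stablePermutationImportance}.

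First, by independence, I would write each reliability as a sum over the joint configurations $x=(x_1,\dots,\widehat{x_j},\dots,\widehat{x_i},\dots,x_n)$ of the remaining components, factoring out the contribution of components $i$ and $j$. For a fixed $x$ let $\Pr(x)=\prod_{k\neq i,j}\pare{p_k^{x_k}q_k^{1-x_k}}$ and write $\Phi(a,b;x)$ for the value of the structure function when component $j$ is in state $a$ and component $i$ is in state $b$. Expanding the four $(a,b)\in\{0,1\}^2$ possibilities gives
\[
R_{\p}(S)=\sum_x \Pr(x)\Big[q_jq_i\Phi(0,0;x)+p_jq_i\Phi(1,0;x)+q_jp_i\Phi(0,1;x)+p_jp_i\Phi(1,1;x)\Big],
\]
while $R_{\p'}(S)$ is the same expression with $p_j,q_j$ and $p_i,q_i$ exchanged. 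The $(0,0)$ and $(1,1)$ terms are symmetric in this swap, so subtracting yields
\[
R_{\p'}(S)-R_{\p}(S)=(p_iq_j-p_jq_i)\sum_x \Pr(x)\Big[\Phi(1,0;x)-\Phi(0,1;x)\Big].
\]

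A direct computation gives $p_iq_j-p_jq_i=p_i-p_j\geq 0$ by hypothesis, so it suffices to show that every bracket $\Phi(1,0;x)-\Phi(0,1;x)$ is non-negative. This is exactly the permutation importance inequality with $\nu=0$, $\mu=1$: strong stability under the usual ordering $1<2<\cdots<n$ makes component $j$ at least as permutation important as component $i$ (since $j<i$), and Proposition~\ref{prop:stablePermutationImportance} then gives $\Phi(\dots,\overset{j}{1},\dots,\overset{i}{0},\dots)\geq\Phi(\dots,\overset{j}{0},\dots,\overset{i}{1},\dots)$ for every choice of the remaining coordinates $x$. Summing over $x$ and multiplying by $p_i-p_j\geq 0$ yields $R_{\p'}(S)\geq R_{\p}(S)$.

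The calculation is entirely routine once the conditioning is set up; the only real content sits in the last step, where strong stability is consumed through Proposition~\ref{prop:stablePermutationImportance}. The mildly delicate bookkeeping is just keeping track of which position holds which probability after the swap, so that the $(0,0)$ and $(1,1)$ contributions cancel cleanly and the sign of $p_iq_j-p_jq_i$ comes out right.
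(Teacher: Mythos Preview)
Your proof is correct and is actually cleaner than the paper's. The paper argues by classifying each working state $f\in\Fc(S)$ according to which of $i,j$ it contains, pairing states with $i\in f,\ j\notin f$ against their strongly-stable companions $f^{(i,j)}_{(-,+)}$, and observing that the only unmatched contributions (those with $j\in f$, $i\notin f$, and $f^{(j,i)}_{(-,+)}\notin\Fc(S)$) move in the right direction under the swap. You instead use the pivotal decomposition on the pair $(i,j)$, which collapses the whole comparison to the single algebraic identity
\[
R_{\p'}(S)-R_{\p}(S)=(p_i-p_j)\sum_x \Pr(x)\big[\Phi(1,0;x)-\Phi(0,1;x)\big],
\]
and then discharge the bracket via Proposition~\ref{prop:stablePermutationImportance}. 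Your route is more transparent and makes explicit why the $(0,0)$ and $(1,1)$ configurations are irrelevant; it also recycles the permutation-importance result rather than re-deriving the needed monotonicity from strong stability. The paper's path-matching argument, by contrast, invokes strong stability directly and avoids the detour through Proposition~\ref{prop:stablePermutationImportance}, but is less crisp about how the individual path probabilities aggregate into $R(S)$.
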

\begin{proof}
We have that 
\[
R(S)=pr\left( \bigvee_{s\in{\Fc}(S)}(c_1\geq s_1)\wedge\cdots \wedge(c_n\geq s_n)\right),
\]
where $\Fc(S)=\{f_1,\dots,f_r\}$ is the set of working states of $S$ and the probabilities are computed using $\p$. Let $j<i$ such that $p_i<p_j$ and $\p'$ results from interchanging $p_i$ and $p_j$ in $\p$. 
Now, consider the following cases for any path $f\in\Fc(S)$:
\begin{itemize}
\item[-]{If neither $i$ nor $j$ are in $f$ then $pr_\p(f)=pr_{\p'}(f)$.}
\item[-]{If both $i$ and $j$ are in $f$ then $pr_\p(f)=pr_{\p'}(f)$.}
\item[-]{If $i\in f$ but $j\notin f$, then since $S$ is strongly stable, there exists some $f'\in\Fc(S)$ such that $f'=f^{(i,j)}_{(-,+)}$, hence $pr_\p(f)=pr_{\p'}(f')$, $pr_{\p'}(f)=pr_{\p}(f')$, and hence the computation of the total reliability is unaltered.}
\item[-]{If $j\in f$ and $i\notin f$, and $f^{(j,i)}_{(-,+)}\in \Fc(S)$ then the same argument from the previous case holds; hence, the total reliability is unaltered.}
\item[-]{If $j\in f$ and $i\notin f$, but $f^{(j,i)}_{(-,+)}\notin \Fc(S)$ then $pr_{\p'}(f)\geq pr_\p(f)$.}
\end{itemize}
\end{proof}

This proposition indicates that the reliability of a strongly stable system is higher when the components with smaller indices are the most reliable ones. In fact, ordering the components in a descending order with respect to their working probabilities is the optimal assignment for strongly stable systems.

\begin{corollary}\label{cor:precedence}
For any strongly stable system, the maximum of $R_{\sigma(\p)}(S)$ for any permutation $\sigma$ of $\p$ is attained when $\sigma$ is a  monotone descending ordering of the elements of $\p$.
\end{corollary}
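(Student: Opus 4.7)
The plan is a standard exchange argument built directly on top of the preceding proposition. Given any permutation $\sigma$ of $\p$, I would describe an algorithm that transforms $\sigma(\p)$ into the monotone descending ordering by a finite sequence of transpositions, showing that each individual transposition cannot decrease the reliability.

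First, I would set $\p^{(0)} = \sigma(\p)$ and inductively define $\p^{(t+1)}$ as follows: if $\p^{(t)}$ is already in monotone descending order, stop; otherwise, pick any pair of indices $j<i$ such that the $j$-th entry of $\p^{(t)}$ is strictly smaller than the $i$-th entry (such a pair must exist, e.g.\ one can take an adjacent inversion), and let $\p^{(t+1)}$ be the vector obtained from $\p^{(t)}$ by swapping these two entries. By the hypothesis of strong stability with respect to the usual ordering $1<2<\cdots<n$ and by the preceding proposition applied to $\p^{(t)}$ and the pair $(i,j)$, we have $R_{\p^{(t+1)}}(S)\geq R_{\p^{(t)}}(S)$.

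Next I would argue termination: since each step strictly decreases the number of inversions of the sequence (when the swap is applied to an adjacent inversion, which one can always choose), after at most $\binom{n}{2}$ steps the process terminates at a vector $\p^{(T)}$ that has no inversions, i.e.\ is monotone descending. Concatenating the inequalities yields
\[
R_{\p^{(T)}}(S)\geq R_{\p^{(T-1)}}(S)\geq \cdots \geq R_{\p^{(0)}}(S) = R_{\sigma(\p)}(S).
\]
Since $\p^{(T)}$ is a fixed permutation of the multiset of entries of $\p$ (namely, the monotone descending one), this shows that the descending ordering attains the maximum of $R_{\sigma(\p)}(S)$ over all permutations $\sigma$.

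There is no real obstacle here beyond checking that the inductive swap is indeed an instance covered by the previous proposition; the only subtle point is that strong stability refers to a fixed ordering of the components and we are changing the probability assignment rather than the structure, so the proposition applies at every step to the same system $S$. The finiteness of the symmetric group guarantees that the maximum is attained, and the exchange argument shows it is attained (in particular) at the descending ordering, which completes the proof.
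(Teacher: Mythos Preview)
Your proposal is correct and is precisely the standard exchange (bubble-sort) argument that turns the preceding proposition into the corollary; the paper itself gives no separate proof, treating the statement as an immediate consequence. Your only subtle check---that the proposition applies at every step because the system and its stability order are fixed while only the probability vector changes---is exactly right, and together with the inversion-count termination this completes the argument.
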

\begin{example}\label{ex:fourcomponents}
Let $S$ be a binary system with $4$ components, $a,b,c,d$ such that its set of minimal paths is $\Fc(S)=\{ab,ac,bc,cd\}$. Observe that $S$ is strongly stable with respect to the orderings $c<b<a<d$ and $c<a<b<d$. Assume that the probabilities we can assign to each of the components are $\{0.6, 0.7, 0.8, 0.9\}$. Then using Corollary \ref{cor:precedence} we can assign $p_a=0.7, p_b=0.8, p_c=0.9,p_d=0.6$ or $p_a=0.8, p_b=0.7, p_c=0.9,p_d=0.6$ to maximize the reliability of the system. Any other distribution of these probabilities results in a less or equally reliable system.

For this system we have that if we consider the structural importance,
\[
I^S_c>I^S_a=I^S_b>I^S_d.
\]
Observe that $S$ is stable with respect to both $\tau=c<a<b<d$ and $\tau'=c<b<a<d$. The role of components $a$ and $b$ is the same with respect to stability, and on the other hand, their structural importance is the same, i.e. they are interchangeable with respect to both criteria.
\end{example}

\section{The algebraic method for system reliability}\label{sec:algebraic}
The algebraic approach to system reliability based on monomial ideals started in \cite{GW04} and was developed in a series of papers, see \cite{SW09,SW10,SW15} among others. The main idea of this approach is to associate to each level $l$ of an $n$-components coherent system $(S,\Phi)$ a monomial ideal $I_{S,l}$ whose monomial set consists of those corresponding to the $l$-working states of $S$ and their multiples. These ideals represent an algebraic encoding of the structure function $\Phi$ of $S$.  A principal contribution of this approach is the construction of improved inclusion-exclusion (IIE) formulas that provide also Bonferroni-type \cite{D03} upper and lower bounds for the reliability of the system. These bounds are based on computing free resolutions of the ideals $I_{S,l}$. Another recent variant of the algebraic method is to obtain a disjoint decomposition of each ideal $I_{S,l}$ such that the $l$-reliability of $S$ is obtained as a sum of disjoint products (SDP), this is based on computing involutive bases of the ideals $I_{S,l}$ \cite{IPS22}.

The ideals $I_{S,l}$ are defined as follows. Let $(S,\Phi)$ be a system with $n$ components. Consider a polynomial ring on $n$ variables over a field $\kb$ (usually $\mathbb{Q}$ or $\mathbb{R}$ are considered in applications), this ring is denoted by $\R=\kb[x_1,\dots,x_n]$.  For any state $s=(s_1,\dots,s_n)$ of $S$ we say that the monomial corresponding to $s$ is $\x^s=x_1^{s_1}\cdots x_n^{s_n}$. The monotonicity of $S$ implies that the monomials corresponding to the set $\Fc_l(S)$ of $l$-paths of $S$ generate a monomial ideal $I_{S,l}\subset \R$ for each level $l$ of $S$. The minimal generating set of $I_{S,l}$ is given by the monomials corresponding to the minimal paths of $S$, i.e. $\overline{\Fc}_l(S)$. The algebraic analysis of these ideals provides information about the system $(S,\Phi)$, such as its reliability. To obtain the reliability of $S$ we assign to each monomial $\x^\mu=x_1^{\mu_1}\cdots x_n^{\mu_n}$ the probability of its correspondent state, i.e. $pr(\x^\mu)=pr(\mu)=pr(c_1\geq\mu_1\wedge c_2\geq\mu_2\wedge\cdots\wedge c_n\geq\mu_n)$. 

\subsection{Improved Inclusion-Exclusion formulas}
The Hilbert function of an ideal $I$ is an integer function that for any $z\in\mathbb{Z}$ gives the number of monomials of degree $z$ that are in $I$, its generating function is called the Hilbert series of $I$. The Hilbert function and the Hilbert series provide a compact method to enumerate the monomials in the ideal $I$. When applied to the $l$-reliability ideal of a system $S$, they enumerate the $l$-working states of $S$ and can therefore be used to compute the reliability of the system. Note that in this context we restrict ourselves to resolutions of monomial ideals. 

As can be seen in detail in \cite{SW10,MPSW20}, the Hilbert series of $I_{S,l}$ is a rational function, and its numerator, denoted $HN_{I_{S,l}}$, provides a compact formula for the $l$-reliability of $S$, closely related to the Inclusion-Exclusion formula. If the Hilbert series numerator is furthermore given in the form obtained from a so called {\em free resolution} of $I_{S,l}$ then this formula can be truncated to obtain Bonferroni-like bounds in a compact way \cite{D03}. Therefore, the main ingredient to obtain these algebraic IIE formulas is a free resolution of $I_{S,l}$. The following is a brief description of this important object.


The $\R$-module structure of an ideal $I\subseteq \R=\kb[x_1,\dots,x_n]$ is usually described using a {\em free resolution}, which is a series of graded or multigraded free modules and morphisms among them. 
   \begin{align*}
&0\longrightarrow \bigoplus_{j=1}^{r_d}\R(-\mu_{d,j})\stackrel{\partial_d}{\longrightarrow} \cdots \stackrel{\partial_1}{\longrightarrow}\bigoplus_{j=1}^{r_0}\R(-\mu_{0,j})\stackrel{\partial_0}{\longrightarrow}\R{\longrightarrow}\R/I\longrightarrow 0.
\end{align*}
Here, the $r_i$ are called {\em ranks} of the modules in the free resolution and the $\mu_{i,j}$ for each $i$ denote the multi-degrees of the pieces of the $i$-th module of the resolution. The length of the resolution is given by $d$. Among the various resolutions of an ideal $I$ the {\em minimal free resolution} is the one having smallest ranks; in this case $d$ is known to be less than $n$. The ranks of the minimal free resolution of $I$ are called the {\em Betti numbers} of $I$. 

We can now obtain a formulation of $HN_I(x_1,\dots,x_n)$ by means of the descriptors of any (non-necessarily minimal) free resolution of $I$:

\begin{equation}\label{eq:HilbertFormula}
HN_I(x_1,\dots,x_n)=\sum_{i=0}^d(-1)^i\sum_{j=1}^{r_i}\xb^{\mu_{i,j}}. 
\end{equation}

This expression gives a (compact) formula for the $l$-reliability of $S$  if we replace each $\xb^{\mu_{i,j}}$ by $pr(\mu_{i,j})$; it can be truncated and produces the following Bonferroni-type bounds for $R_l(S)$, see \cite{SW09}
 \begin{equation}
     \begin{aligned}\label{eq:resolutionBounds}
       R_l(S)\leq\sum_{i=0}^t(-1)^{i}\sum_{j=1}^{r_i}pr({\mu_{i,j}})\mbox{ for } t\leq d \mbox{ odd,}\\
       R_l(S)\geq\sum_{i=0}^t(-1)^{i}\sum_{j=1}^{r_i}pr({\mu_{i,j}})\mbox{ for } t\leq d \mbox{ even.}\\
     \end{aligned}   
 \end{equation}
 
Free resolutions exist for any monomial ideal and can be constructed in several ways. The resolution producing the most compact algebraic IIE formulas and tighter bounds is the minimal one (which is unique up to isomorphisms), see \cite{SW10} for details on free resolutions and their applications to system reliability. This is in general a demanding computation, although there exist good algorithms that make this approach applicable in practice, see Section \ref{sec:experiments} for further details on computations. 

 \begin{example}[Example 1.4 in \cite{IPS22}]\label{ex:bridge}
 Consider the source-to-terminal network in Figure \ref{fig:bridge}. The minimal paths of this binary system are $MP_1=x_1x_2$, $MP_2=x_4x_5$, $MP_3=x_1x_3x_5$ and $MP_4=x_2x_3x_4$. Its reliability ideal is 
 \[
 I_S=\langle x_1x_2,x_4x_5,x_1x_3x_5,x_2x_3x_4\rangle.
 \]
Using the minimal free resolution of $I(S)$ we obtain the following expression for the reliability of $S$ from the numerator of the Hilbert series of $I(S)$ under the assumption of independent probabilities for the components of $S$:
 \begin{align*}
  R(S)=&p_1p_2{+}p_4p_5{+}p_1p_3p_5{+}p_2p_3p_4\\
  &-(p_1p_2p_3p_4{+}p_1p_2p_3p_5{+}p_1p_2p_4p_5{+}p_1p_3p_4p_5{+}p_2p_3p_4p_5)\\
 &+2 p_1p_2p_3p_4p_5,
 \end{align*}
 while the usual Inclusion-Exclusion formula has the form 
 \begin{align*}
 R(S)=&p_1p_2{+}p_4p_5{+}p_1p_3p_5{+}p_2p_3p_4\\
 &{-}(p_1p_2p_3p_4{+}p_1p_2p_3p_5{+}p_1p_2p_4p_5{+}p_1p_3p_4p_5{+}p_2p_3p_4p_5\\
 &+p_1p_2p_3p_4p_5)+4 p_1p_2p_3p_4p_5-p_1p_2p_3p_4p_5.
 \end{align*}
 
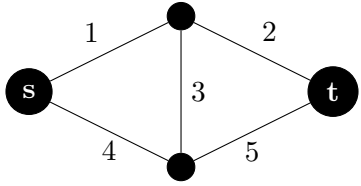
\begin{figure}
\begin{center}
\begin{tikzpicture}
      \node[circle,draw=black, fill=black,minimum size=2pt, text=white] (c1) at (0,0) {$\bf{s}$};      
      \node[circle,draw=black, fill=black,minimum size=1pt] (c2) at (2,1) {};
      \node[circle,draw=black, fill=black,minimum size=1pt] (c3) at (2,-1) {};
      \node[circle,draw=black, fill=black,minimum size=2pt, text=white] (c4) at (4,0) {$\bf{t}$};

      \draw [-] (c1) -- node[auto] {$1$} (c2);
      \draw [-] (c1) -- node[below] {$4$} (c3);
      \draw [-] (c2) -- node[auto] {$3$} (c3);
      \draw [-] (c2) -- node[auto] {$2$} (c4);
      \draw [-] (c3) -- node[below] {$5$}  (c4);
    \end{tikzpicture}
    
\caption{Bridge network.}\label{fig:bridge}
\end{center}
\end{figure}
\end{example}

\subsection{Algebraic Sum of Disjoint Products}
The reliability of networks and other systems have been traditionally evaluated using boolean algebra formulations for the minimal paths (or cuts). The Sum of Disjoint Products approach to system reliability starts with a Boolean product that corresponds to the paths of the system and transforms this expression into another one in terms of disjoint (mutually exclusive) products. Several efficient algorithms have been described in the literature to compute sums of disjoint products, and also several versions of this approach have been developed for multi-state systems, see for example \cite{A79,LT98,XFQD12,HCL22,Y15}. As a simple example consider a system $S$ with three components such that its minimal paths are $\{c_1c_2,c_3\}$. The boolean formulation of the reliability of this system is 
\[
R(S)=pr\left((c_1\wedge c_2)\vee c_3\right).
\]
Which, using inclusion-exclusion can be evaluated as 
\[
R(S)=pr(c_1)pr(c_2)+pr(c_3)-pr(c_1)pr(c_2)pr(c_3)=p_1p_2+p_3-p_1p_2p_3.
\]
The Sum of Disjoint Products formula corresponding to the reliability of this system is
\[
R(S)=pr(c_1)pr(c_2)pr(\bar{c_3})+pr(c_3)=p_1p_2(1-p_3)+p_3,
\]
where $\bar{c_3}$ indicates the complement (negation) of $c_3$, i.e. $pr(\bar{c_3})=1-pr(c_3)$.

An algebraic version of the Sum of Disjoint Products approach consists in finding a combinatorial decomposition of the sets of monomials in $I_j(S)$ into disjoint sets (see Figure \ref{fig:Pbasis}). This can be done in several ways, e.g. Rees and Stanley decompositions \cite{R56,S78}.  A computationally efficient approach to these decompositions uses the concept of {\em involutive basis} of monomial ideals. Since this is not a widely known concept, let us introduce it here.
Involutive bases were introduced in \cite{GB98a,GB98b} and an extensive study of their role in commutative algebra is given in \cite{S09a,S09b,S10}. 
They are a type of Gr\"obner bases with additional combinatorial properties.

For any subset $N\subseteq\{1,\dots,n\}$, we denote $\mathbb{N}^n_N=\{\nu \in \mathbb{N}^n_0\vert \forall j\notin N, n_j=0\}$. The only non-zero entries of the multi-indices in $\mathbb{N}^n_N$ occur at the positions given by $N$.

\begin{definition}\label{def:division}
Let $\mathcal{N}\subset \mathbb{N}^n$ be a finite set, and $L$ an assignment of a subset $N_{L,\mathcal{N}}(\nu)\subseteq\{1,\dots,n\}$ of indices to every multi-index $\nu\in\mathcal{N}$. We say that $L$ is an {\em involutive division} if the involutive cones  $\mathcal{C}_{L,\mathcal{N}}(\nu)=\nu+\mathbb{N}^n_{N_{L,\mathcal{N}}(\nu)}$ satisfy that:
\begin{enumerate}
\item If there exist  $\mu,\nu\in\mathcal{N}$, $\mu\neq\nu$, such that $\mathcal{C}_{L,\mathcal{N}}(\mu)\cap \mathcal{C}_{L,\mathcal{N}}(\nu)\neq \emptyset$, then $\mathcal{C}_{L,\mathcal{N}}(\mu)\subseteq\mathcal{C}_{L,\mathcal{N}}(\nu)$ or $\mathcal{C}_{L,\mathcal{N}}(\nu)\subseteq \mathcal{C}_{L,\mathcal{N}}(\mu)$, i.e. there are no non-trivial intersections between involutive cones. 
\item If $\mathcal{N}'\subset \mathcal{N}$ then $N_{L,\mathcal{N}}(\nu)\subseteq N_{L,\mathcal{N}'}(\nu)$ for all $\nu\in\mathcal{N}'$.
\end{enumerate}
If $L$ is an involutive division, we say that the elements of  $N_{L,\mathcal{N}}(\nu)\subseteq\{1,\dots,n\}$ are the multiplicative indices of $\nu$.
\end{definition}

If $i$ is a multiplicative index, we say that $x_i$ is a multiplicative variable. The set of multiplicative indices (or variables) of a multi-index $\nu$ with respect to the involutive division $L$ and a set $\mathcal{N}$ is denoted by $\Xk_{L,\mathcal{N}}(\nu)$, and the set of non-multiplicative indices (or variables) is denoted by $\overline{\Xk}_{L,\mathcal{N}}(\nu)$. We say that $x^\mu$ is an {\em involutive divisor} of $x^\nu$ with respect to $L$ if $x^\mu\vert x^\nu$ and $x^{\nu-\mu}\in \kb[\Xk_{L,\mathcal{N}}({\mu})]$.

The following are the two main examples of involutive divisions.
\begin{definition}[Janet division]
Consider the following subsets of the given set $\mathcal{N}\subset\mathbb{N}^n_0$:
\[
(d_1,\dots,d_k)=\{\nu\in\mathcal{N} \vert \nu_i=d_i,1\leq i \leq k\}.
\]
The index $1$ is Janet-multiplicative for $\nu\in\mathcal{N}$ if $\nu_1=max_{\mu\in\mathcal{N}}\{\mu_1\}$. Any index $1<k$ is multiplicative for $\nu\in(d_{1},\dots,d_{k-1})$ if $\nu_k=\max_{\mu\in(d_{1},\dots,d_{k-1})}\{\mu_k\}$.
\end{definition}

\begin{definition}[Pommaret division]
Let $\mu=(\mu_1,\dots,\mu_n)\in \mathbb{N}^n$. We say that the {\em class} of $\mu$ or $x^\mu$, denoted by $\cls(\mu)=\cls(x^\mu)$, is equal to $\max\{ i\vert \mu_i\neq 0\}$. The {\em multiplicative variables} of $x^\mu$ with respect to the Pommaret division, are $\Xk_P(\mu)=\Xk_P(x^\mu)=\{x_{\cls (\mu)},\dots,x_n\}$.
\end{definition}

The assignment of multiplicative and non-multiplicative variables is independent of the set $\mathcal{N}$ in the case of the Pommaret division, in such a case we say that the involutive division is {\em global}. The Janet division is not global.

\begin{definition}
A finite collection of monomials $\Bc\subseteq R$ is an {\em involutive basis} of the monomial ideal $I=\langle \Bc \rangle$ with respect to the involutive division $L$ if $I=\bigoplus_{h\in\Bc} h\cdot\kb[\Xk_{L,\Bc}(h)]$ as vector spaces.
\end{definition}

If every finite set of monomials possesses a finite involutive basis with respect to a certain involutive division $L$, we say that $L$ is {\em Noetherian}. The Janet division is Noetherian, but the Pommaret division is not, see for instance the ideal $I=\langle xy\rangle\subseteq\kb[x,y]$. Those monomial ideals which do possess a finite Pommaret basis are called {\em quasi-stable} ideals \cite{S09b}.

\begin{example}\label{ex:quasi-stable-ideal}
Let $I=\langle x_1^2, x_2^3\rangle\subseteq\kb[x_1,x_2]$. Let $P$ be the Pommaret division. Then $\Xk_P(x_1^2)=\{x_1,x_2\}$ and $\Xk_P(x_2^3)=\{x_2\}$. The monomial $x_1x_2^3$ is in $I$ but it is not in the involutive cone of any of its minimal generators, hence the minimal generating set of $I$ is not a Pommaret basis of it. 
The involutive basis of $I$ with respect to the Pommaret division is given by $\Bc=\{x_1^2,x_1x_2^3,x_2^3\}$, therefore $I$ is a quasi-stable ideal. Figure \ref{fig:Pbasis} shows on one side the minimal generating set  of $I$ and their usual (overlapping) multiplicative cone of each of its elements, and on the other side the Pommaret basis of $I$ and the involutive (non-overlapping) cone of each of its elements. For this ideal, the Pommaret and Janet basis with respect to $\Bc$ coincide.

\begin{figure}[t]
\begin{center}
\begin{tikzpicture}[scale=0.63, transform shape]
\tikzset{dot/.style={draw=black,thick,fill=white,circle}}
\tikzset{wdot/.style={draw=black,fill=white,circle}}

\fill[red!20!white] (0,3) rectangle (5.7,5.7);
\fill[red!20!white] (2,0) rectangle (5.7,5.7);
\fill[red!40!white] (2,3) rectangle (5.7,5.7);


\foreach\l[count=\y] in {$x_2$, $x_2^2$,$x_2^3$, $x_2^4$,$x_2^5$}
{
\draw (0,\y) -- (5.5,\y);
}

\draw(0,0)--(6,0);
\draw(0,0)--(0,6);
\foreach \l[count=\x] in {$x_1$, $x_1^2$,$x_1^3$, $x_1^4$,$x_1^5$}
{
\draw (\x,0) -- (\x,5.5);
}
\node at (1.5,-0.3){$x_1^2$};
\node at (-0.5,2.7){$x_2^3$};

\node[dot] at (0,3){};
\node[dot] at (2,0){};
\fill(0,3) circle (.1cm);
\fill(2,0) circle (.1cm);


\foreach \i in {2,...,5}{
    \foreach \j in {0,...,2}{
    {
     \fill(\i,\j) circle (.1cm);
     }
    }
   }
   
 \foreach \i in {0,...,5}{
    \foreach \j in {3,...,5}{
    {
     \fill(\i,\j) circle (.1cm);
     }
    }
   }

\fill[red!20!white] (10,0) rectangle (13.7,5.7);
\fill[red!20!white] (7.7,3) rectangle (8.3,5.7);
\fill[red!20!white] (8.7,3) rectangle (9.3,5.7);

\foreach\l[count=\y] in {$x_2$, $x_2^2$,$x_2^3$, $x_2^4$,$x_2^5$}
{
\draw (8,\y) -- (13.5,\y);
}

\draw(8,0)--(14,0);
\draw(8,0)--(8,6);
\foreach \l[count=\x] in {$x_1$, $x_1^2$,$x_1^3$, $x_1^4$,$x_1^5$}
{
\draw (\x+8,0) -- (\x+8,5.5);
}
\node at (9.5,-0.3){$x_1^2$};
\node at (7.5,2.7){$x_2^3$};

\node[dot] at (8,3){};
\node[dot] at (10,0){};
\fill(8,3) circle (.1cm);
\fill(10,0) circle (.1cm);

\fill(10,1) circle(.1cm);
\fill(10,2) circle(.1cm);

\foreach \i in {11,...,13}{
    \foreach \j in {0,...,2}{
    {
     \fill(\i,\j) circle (.1cm);
     }
    }
   }
   
 \foreach \i in {8,...,13}{
    \foreach \j in {3,...,5}{
    {
     \fill(\i,\j) circle (.1cm);
     }
    }
   }
   
   \node[wdot] at (9,3){};
\node at (9,2.6){$x_1x_2^3$};
   
\end{tikzpicture}
\caption{Minimal generating set (left) and Pommaret basis (right) for the ideal $I=\langle x_1^2, x_2^3\rangle$. Note that the usual cones of each of the generators overlap on all their common multiples (left), and that the involutive cones (right) do not overlap, hence obtaining a partition of the set of monomials in $I$ into disjoint sets.}\label{fig:Pbasis}
\end{center}
\end{figure}
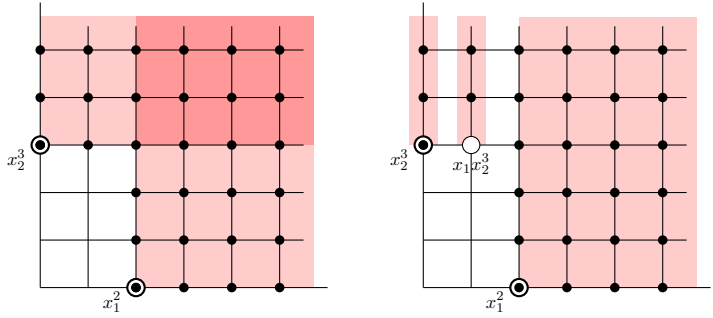

\end{example}


If we have a finite involutive basis $\Bc$ for a monomial ideal $I$ and an involutive division $L$, Definition \ref{def:division} shows that we directly obtain a disjoint partition of the set of monomials in $I$:
\begin{equation}\label{eq:partition}
mon(I)=\bigoplus_{h\in\Bc} h\cdot\kb[\Xk_{L,\Bc}(h)].
\end{equation}

Let $x^\mu\in I_{S,l}$ be a monomial, and $\Xk_{L,\mathcal{N}}(x^\mu)$ be its set of multiplicative variables with respect to the involutive division $L$ and the set $\mathcal{N}$; let $\overline{\Xk}_{L,\mathcal{N}}(x^\mu)$ be its set of non-multiplicative variables. We denote the probability of the involutive cone of $x^\mu$ with respect to $L$ and $\mathcal{N}$ by $\widehat{pr}_L(x^\mu,\mathcal{N})=pr(\bigwedge_{i\in{\Xk}_{L,\mathcal{N}}(x^\mu)}(c_i\geq \mu_i) \bigwedge_{i\in \overline{\Xk}_{L,\mathcal{N}}(x^\mu)}(c_i=\mu_i))$ which in the case of independent components, can be computed as $\prod_{i\in\Xk_{L,\mathcal{N}}(x^\mu)}p_{i,\mu_i}\prod_{i\in\overline{\Xk}_{L,\mathcal{N}}(x^\mu)}\widehat{p}_{i,\mu_i}$, where $\widehat{p}_{i,\mu_i}=pr(c_i=\mu_i)$. Since we consider the set $\mathcal{N}$ to be the involutive basis  $\Bc$ of the ideal $I_{S,l}$, we can drop $\mathcal{N}$ from the notation.

\begin{proposition}\label{prop:algebraicSDP}
Let $\Bc$ be an involutive basis of the $l$-reliability ideal $I_{S,l}$ with respect to an involutive division $L$. The $l$-reliability of system $S$ is given by
 \begin{equation}\label{eq:reliability-involutive}
     R_l(S)=pr(I_{S,l})=\sum_{h\in\Bc}\widehat{pr}_L(h).
   \end{equation}  
\end{proposition}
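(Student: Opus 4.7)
The plan is to interpret the $l$-reliability as a sum of atomic-state probabilities and then regroup this sum according to the disjoint partition given by the involutive basis. Concretely, since the system at any time is in exactly one state $s\in L_1\times\cdots\times L_n$, writing $\widehat{pr}(s)=pr(c_1=s_1\wedge\cdots\wedge c_n=s_n)$ for the probability of being exactly in state $s$, I would first observe
\[
R_l(S)=pr\!\left(\bigvee_{s\in\Fc_l(S)}(c_1\geq s_1)\wedge\cdots\wedge(c_n\geq s_n)\right)=\sum_{s\in\Fc_l(S)}\widehat{pr}(s),
\]
because $\Fc_l(S)$ is precisely the set of $l$-working states (tuples whose associated monomials lie in $I_{S,l}$), and these events are mutually exclusive.

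Next I would invoke the involutive partition (\ref{eq:partition}), which says the monomials of $I_{S,l}$ decompose as the disjoint union $\bigoplus_{h\in\Bc} h\cdot\kb[\Xk_{L,\Bc}(h)]$. Translating this to states, the set $\Fc_l(S)$ splits as a disjoint union of involutive cones $\Ck_{L,\Bc}(h)=\{h\cdot x^\alpha:\alpha\in\mathbb{N}^n_{\Xk_{L,\Bc}(h)}\}$. Therefore the sum over $l$-working states breaks up as
\[
R_l(S)=\sum_{h\in\Bc}\ \sum_{x^s\in\Ck_{L,\Bc}(h)}\widehat{pr}(s).
\]

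The final step is to identify each inner sum with $\widehat{pr}_L(h)$. Writing $h=x^\mu$, an element $x^s\in\Ck_{L,\Bc}(h)$ is precisely a state $s$ with $s_i=\mu_i$ for every $i\in\overline{\Xk}_{L,\Bc}(h)$ and $s_i\geq\mu_i$ (with $s_i\in L_i$) for every $i\in\Xk_{L,\Bc}(h)$. Using independence of the components and the identity $\sum_{t\geq\mu_i}\widehat{p}_{i,t}=pr(c_i\geq\mu_i)=p_{i,\mu_i}$, the inner sum factors as
\[
\sum_{x^s\in\Ck_{L,\Bc}(h)}\widehat{pr}(s)=\prod_{i\in\Xk_{L,\Bc}(h)}p_{i,\mu_i}\prod_{i\in\overline{\Xk}_{L,\Bc}(h)}\widehat{p}_{i,\mu_i}=\widehat{pr}_L(h),
\]
which is exactly the probability of the event defining $\widehat{pr}_L(h)$ in the paragraph preceding the statement. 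Summing over $h\in\Bc$ yields (\ref{eq:reliability-involutive}).

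The only subtle point — and the one I would be most careful with — is the distinction between the cone probability $pr(x^\mu)=pr(c_1\geq\mu_1\wedge\cdots)$ used elsewhere in the algebraic setup and the atomic probability $\widehat{pr}(s)$ needed here: the events indexed by $\Fc_l(S)$ are not disjoint under the former notion, but the involutive cones are disjoint under the latter, which is exactly what makes the sum-of-disjoint-products formula work. Once this is properly flagged, the remaining manipulations are a straightforward application of (\ref{eq:partition}) and independence.
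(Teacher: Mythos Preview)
Your proof is correct and follows essentially the same approach as the paper: both use the disjoint decomposition (\ref{eq:partition}) of the monomials in $I_{S,l}$ into involutive cones and identify the probability of each cone with $\widehat{pr}_L(h)$. Your version is more explicit (working at the level of atomic state probabilities), and one small simplification is that your appeal to independence is not actually needed---since $\widehat{pr}_L(h)$ is \emph{defined} as the probability of the event $\bigwedge_{i\in\Xk}(c_i\geq\mu_i)\wedge\bigwedge_{i\in\overline{\Xk}}(c_i=\mu_i)$, the inner sum equals it directly, independence or not.
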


\begin{proof}
The first equality is given by the algebraic description of the system's reliability. For the second one, consider the disjoint decomposition (\ref{eq:partition}) of $I_{S,l}$ given by $\Bc$. The set of monomials in $I_{S,l}$ is the disjoint union of the involutive cones of the elements on $\Bc$. The probability associated to the involutive cone of $h\in\Bc$ is given by $\widehat{pr}_L(h)$ and since the union of these cones is disjoint, the probability of the union equals the sum of the probabilities of cones, as claimed.
\end{proof}
   
Proposition \ref{prop:algebraicSDP} is an algebraic version of the sum-of-disjoint products approach to the evaluation of the system's reliability \cite{KZ03}. 
   
 \begin{example}
 Consider the ideal from Example~\ref{ex:bridge}.
 The Janet basis for the reliability ideal of the bridge structure is 
 \[
 \Bc=\{x_1x_2,x_4x_5, x_1x_3x_5,x_2x_3x_4,x_1x_4x_5,x_2x_4x_5 \}.
 \]
 The Janet non-multiplicative variables for the elements of $\Bc$ are: $\bar{\Xk}(x_1x_2)=\emptyset$,  $\bar{\Xk}(x_4x_5)=\{x_1,x_2\}$, $\bar{\Xk}(x_1x_3x_5)=\{x_2\}$, $\bar{\Xk}(x_2x_3x_4)=\{x_1\}$, $\bar{\Xk}(x_1x_4x_5)=\{x_2,x_3 \}$ and $\bar{\Xk}(x_2x_4x_5)=\{x_1,x_3\}$. Using (\ref{eq:reliability-involutive}) we obtain the reliability of this system:
 \begin{align*}
 R(S)=&p_1p_2+\widehat{p}_1\widehat{p}_2p_4p_5+p_1\widehat{p}_2p_3p_5+\widehat{p}_1p_2p_3p_4+p_1\widehat{p}_2\widehat{p}_3p_4p_5\\
 &+\widehat{p}_1p_2\widehat{p}_3p_4p_5.
 \end{align*}
 \end{example}

\section{Algebraic algorithms for stable systems}\label{sec:algebraicStable}
The two algebraic approaches described in Section \ref{sec:algebraic} are particularly efficient in the case of stable  and strongly stable systems, both binary and multi-state. On the one hand, we have that closed form formulas are known for the minimal free resolutions of the ideals corresponding to stable and strongly stable systems. This means that we can obtain IIE formulas in an efficient way using these resolutions. On the other hand, involutive bases for the ideals corresponding to stable and strongly stable bases are small and easy to obtain, hence the algebraic version of the SDP method is particularly efficient.

\subsection{Stable and strongly stable ideals}
\begin{definition}
Let $\R=\kb[x_1,\dots,x_n]$, a monomial ideal $I\subset \R$ is called {\em strongly stable} if for any monomial $m\in I$ we have that the monomial $\frac{x_jm}{x_i}$ is in $I$ for every $j<i$. $I$ is called {\em stable} if  $\frac{x_jm}{x_{\max(m)}}$ is in $I$ for every $j<\max(m)$, where $\max(m)$ is the biggest index of a variable dividing $m$. 
\end{definition}

In the binary case, the ideal corresponding to the system is square-free, and the above definitions need adaptation, as the only stable and square-free ideal is generated by the variables themselves.

\begin{definition}
A squarefree monomial ideal $I\subset \R$ is called {\em squarefree strongly stable} if for any monomial $m\in I$ we have that the monomial $\frac{x_jm}{x_i}$ is in $I$ for every $j<i$ such that $x_j$ does not divide $m$. $I$ is called {\em squarefree stable} if  $\frac{x_jm}{x_{\max(m)}}$ is in $I$ for every $j<\max(m)$ such that $x_j$ does not divide~$m$.
\end{definition}

Given a monomial ideal $I$, we say that the stable (resp. strongly stable) closure of $I$ is the smallest stable (resp. strongly stable) ideal $\overline{I}$ such that $I\subseteq\overline{I}$.

One way to encode the monomials of an ideal, which in the case of reliability ideals encode the states of a system, is by using {\em cumulative exponents}, see \cite{DFMSS19}.
\begin{definition}
The {\em cumulative exponent} of a monomial  $m=x_1^{a_1}x_2^{a_2}\cdots x_n^{a_n}$ is defined as
\[
\sigma(m)=(\sigma_1(m),\sigma_2(m),\dots,\sigma_n(m)),
\]
where $\sigma_i(m)=a_i+a_{i+1}+\cdots+a_n$.
\end{definition}

 It is easy to see that $\sigma_1(m)$ equals the total degree of $m$ and that the cumulative exponent of any monomial is a monotone non-increasing sequence. We can also obtain the monomial corresponding to such a sequence for a non-increasing vector  $\sigma$, the corresponding monomial $m$ is given by 
\[
m=x_1^{\sigma_1-\sigma_2}\cdots x_{n-1}^{\sigma_{n-1}-\sigma_n}x_n^{\sigma_n}.
\]

\begin{proposition}Let $S$ be a binary coherent system. Algorithm \ref{alg:closure} computes the strongly stable closure of $S$ by means of its correspondent reliability ideal. 
\end{proposition}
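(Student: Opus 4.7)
The plan is to establish three things about the output ideal $\overline{I}$ returned by Algorithm~\ref{alg:closure}: (i) the procedure terminates, (ii) $\overline{I}$ contains the reliability ideal $I_S$ and is squarefree strongly stable, and (iii) every squarefree strongly stable ideal containing $I_S$ already contains $\overline{I}$, so that $\overline{I}$ is the unique strongly stable closure; equivalently, its minimal generators encode the minimal paths of the strongly stable closure of the system.

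For termination, observe that every step of the algorithm adds a monomial of the form $x_j m/x_i$ with $j<i$, $x_i \mid m$ and $x_j\nmid m$. This move preserves the total degree and produces a squarefree monomial. The set of squarefree monomials in $\R$ of total degree at most $\max\{\deg(g):g\in I_S\}$ is finite, so only finitely many new monomials can ever be generated before the process stabilizes.

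For (ii), by construction no stable move applied at termination to a generator of $\overline{I}$ produces a monomial outside $\overline{I}$. The step that must be verified is that this generator-level condition extends to every monomial of $\overline{I}$. Given a squarefree $m\in\overline{I}$, decompose $m=u\cdot g$ with $g$ a minimal generator of $\overline{I}$ dividing $m$ and $u$ a squarefree monomial. For any $j<i$ with $x_i\mid m$ and $x_j\nmid m$, either $x_i\mid g$ (so $x_i\nmid u$ by squarefreeness; the generator-level condition gives $x_jg/x_i\in\overline{I}$, hence $x_jm/x_i=u\cdot(x_jg/x_i)\in\overline{I}$) or $x_i\nmid g$ and $x_i\mid u$ (so $x_j\nmid u$, making $x_ju/x_i$ squarefree, whence $x_jm/x_i=(x_ju/x_i)\cdot g\in\overline{I}$).

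For (iii), let $J$ be any squarefree strongly stable ideal with $I_S\subseteq J$, and induct on the order in which the algorithm produces monomials: the minimal generators of $I_S$ lie in $J$ by hypothesis, and if some $m\in J$ is later expanded to $x_jm/x_i$, the strong stability of $J$ forces $x_jm/x_i\in J$. Hence every monomial produced by the algorithm lies in $J$, so $\overline{I}\subseteq J$. The main obstacle is part (ii): the proof that closure under stable moves on the generating set yields an ideal that is strongly stable with respect to all its monomials is delicate in the squarefree setting—where one cannot freely raise exponents on variables already present—and the small case analysis above is what makes the generator-level test suffice; the analogous statement for ordinary (non-squarefree) strongly stable ideals is immediate by comparison.
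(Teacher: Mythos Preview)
Your proof is broadly sound but follows a different line from the paper, and it contains one point that needs tightening.

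The paper's argument is built around the cumulative-exponent encoding that the algorithm actually uses. It verifies that subtracting $e_i$ from a cumulative vector $\sigma$ corresponds to the monomial move $g(\sigma)\mapsto x_{i-1}\,g(\sigma)/x_i$ (more generally it records how $\sigma$ changes under any $x_j m/x_i$), and derives termination from the fact that every element inserted into $P$ is strictly lexicographically smaller than the element just extracted, so a well-ordering argument applies. The paper says comparatively little about why the output set $D$ really generates the strongly stable closure; your parts~(ii) and~(iii) fill this in and are more thorough on correctness than the paper's own proof.

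However, you never engage with the cumulative-exponent mechanism, and this leads to an imprecise description of the algorithm's step. You write that ``every step of the algorithm adds a monomial of the form $x_j m/x_i$ with $j<i$'', but in fact each pass through line~10 produces only the \emph{adjacent} move $x_{i-1}m/x_i$: subtracting $e_i$ from $\sigma$ increases the exponent of $x_{i-1}$ by one and decreases that of $x_i$ by one, leaving the others unchanged. Your ``by construction'' claim in part~(ii)---that at termination no stable move on a generator leaves $\overline{I}$---therefore still requires the (true but not entirely trivial) observation that closure under adjacent squarefree swaps implies closure under all squarefree swaps $x_j m/x_i$ with $j<i$. Once you insert that remark (or route through cumulative exponents as the paper does), the argument is complete; your termination via finiteness of squarefree monomials of bounded degree is a clean alternative to the paper's lexicographic descent.
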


\begin{proof}
Let $I_S$ be the reliability ideal of the system $S$ and $m\in I_S$ a monomial. If $m'$ is in the strongly stable closure of $\langle m\rangle$ (the ideal generated by the monomial $m$) then $\sigma_i(m')\leq\sigma_i(m)$ for all $i$ and the inequality is strict for some $i$. In particular, let $m$ be a monomial such that $x_i$ divides $m$. Then the cumulative exponent of $m'=\frac{x_jm}{x_i}$ for $j<i$ is given by $\sigma_k(m')=\sigma_k(m)$ for $k\leq j$ and $k>i$, and $\sigma_k(m')=\sigma_k(m)-1$ for $j+1\leq k\leq i$. We use these observations to build the main loop of the algorithm, in which we consider all possible monomials to be included in the strongly stable closure of $I_S$.

With respect to termination, observe that in lines $7$ and $11$ of the algorithm, $g(\sigma)$ and $g(c)$ denote the monomial corresponding to the cumulative vectors $\sigma$ and $c$. Observe that in each step of the main loop (lines $4$ to $16$) we extract one element from $P$ in line $5$ but in the loop in lines $9$ to $14$ we (possibly)  introduce several elements in $P$. The termination of the algorithm is however ensured by the fact that the elements introduced in line $12$ are strictly smaller than the element extracted in line $6$, hence by a good ordering argument, we eventually extract all the elements in $P$.
\end{proof}

\begin{remark}
Algorithm \ref{alg:closure} can be used to obtain the stable closure of the input by considering in line $9$ only the last nonzero exponent of $g(\sigma)$. Moreover, for squarefree ideals the algorithm can be easily modified, for instance adding in line $11$ the condition that $g(c)$ is squarefree.
\end{remark}

\begin{algorithm}
\caption{Strongly stable closure of monomial ideal}\label{alg:closure}
\begin{algorithmic}[1]
\Require Set of monomials $\{g_1,\dots,g_r\}\in\kb[x_1,\dots,x_n]$
\Ensure Set of generators of the strongly stable closure of $I=\langle g_1,\dots,g_r\rangle$
\State $M \gets \{g_1,\dots,g_r\}$
\State $P \gets \{\sigma(g_1),\dots,\sigma(g_r)\}$
\State $D \gets \emptyset$
\While{$P \neq \emptyset$}
\State $\sigma \gets$ Last element in $P$ by lexicographic order
\State $P \to \sigma$
\If{$g(\sigma)$ is not divisible by any element in $D$} 
    \State $D \gets g(\sigma)$
        \ForAll{$i\in\{2,\dots,n\}$ such that $g_i>0$ }
        \State $c\gets \sigma-(0,\dots,\stackrel{i}{1},\dots,0)$
        \If{$c$ is decreasing {\bf and} $g(c)$ is not divisible by any element in $D$} 
         \State $P\gets c$
        \EndIf
        \EndFor
\EndIf
\EndWhile
\State \Return $D$
\end{algorithmic}
\end{algorithm}

\subsection{Free resolutions of reliability ideals of stable systems}\label{sec:resolutionStable}
\subsubsection{Multi-state systems}
Let $S$ be a system with $n$ multi-state components. If $S$ is a (strongly) stable system for level $l$, then its corresponding ideal $I_{S,l}$ is a (strongly) stable ideal. The resolution described in \cite{EK90} by Eliahou and Kervaire is an explicit form of the minimal free resolution for stable ideals, hence it is also valid for strongly stable ideals. To describe this resolution we need the following notation: we call {\em admissible symbol} any pair $[m;u]$ where $m$ is a monomial in the minimal monomial generating set of $I_{S,l}$ and $u$ is an increasing set of variables such that $\max(u)<\max(m)$. Then, if $I$ is stable, there is a generator of the $i$-th module of the minimal free resolution of $I$ for each admissible symbol with $\vert u\vert=i$. We say that the multi-degree of an admissible symbol is given by $\md([m;u])=\md(m\cdot x_{u_1}\cdots x_{u_i})$ where $u=(u_1<\cdots<u_i)$. 

\begin{proposition}
The $l$-reliability of a stable system $S$ with multi-state components is given by 
\[
R_l(S)=\sum_{i=0}^d (-1)^{i}\sum_{\vert u\vert=i}pr(\md([m;u])),
\]
where the inner sum runs through all admissible symbols and $d$ is the maximal length of any sequence $u$ such that $[m;u]$ is an admissible symbol. 
\end{proposition}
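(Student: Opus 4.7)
The plan is to combine the general algebraic reliability formula (\ref{eq:HilbertFormula}) with the explicit Eliahou--Kervaire description of the minimal free resolution of a stable ideal, which was introduced in the paragraph preceding the proposition. Since $S$ is stable for level $l$, the ideal $I_{S,l}$ is a stable monomial ideal, so the Eliahou--Kervaire resolution applies and is in fact the minimal free resolution of $I_{S,l}$.

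First I would recall the structure of the Eliahou--Kervaire resolution: its $i$-th free module has one basis element for each admissible symbol $[m;u]$ with $|u|=i$, where $m$ runs through the minimal generators of $I_{S,l}$ (that is, the minimal $l$-paths of $S$) and $u$ is an increasing tuple of variable indices with $\max(u)<\max(m)$; moreover this basis element is placed in multi-degree $\md([m;u])=\md(m\cdot x_{u_1}\cdots x_{u_i})$. Consequently, identifying the ranks and multi-degrees appearing in the generic free resolution of Section~\ref{sec:algebraic}, the rank $r_i$ equals the number of admissible symbols of length $i$, and the multi-degrees $\mu_{i,j}$ are precisely the $\md([m;u])$ with $|u|=i$. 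The length $d$ of the resolution is the largest $i$ for which such an admissible symbol exists, matching the $d$ in the statement.

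Next I would apply formula (\ref{eq:HilbertFormula}) to express the numerator of the Hilbert series as
\[
HN_{I_{S,l}}(x_1,\dots,x_n)=\sum_{i=0}^{d}(-1)^{i}\sum_{|u|=i}\xb^{\md([m;u])}.
\]
The algebraic method of Section~\ref{sec:algebraic} then turns this identity into a reliability formula under the substitution $\xb^{\mu}\mapsto pr(\mu)$: evaluating the Hilbert numerator with each monomial replaced by the probability of its associated state yields $R_l(S)$, because the Hilbert series of $I_{S,l}$ enumerates the $l$-working states of $S$ and Bonferroni-type aggregation (\ref{eq:resolutionBounds}) collapses to the exact value when summed over the full resolution. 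Performing this substitution in the displayed formula yields exactly the claimed expression.

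The only potentially subtle point is the verification that $[m;u]$ really does index a generator sitting in multi-degree $\md([m;u])$ rather than in some shifted degree; this is a bookkeeping step in the Eliahou--Kervaire construction, and once one is careful with the convention that $\md([m;u])=\md(m\prod_{k}x_{u_k})$, the plug-in into (\ref{eq:HilbertFormula}) is immediate. No extra stability hypotheses beyond those already granted are needed, and the proof is short once the resolution is invoked.
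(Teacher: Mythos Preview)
Your proposal is correct and follows essentially the same approach as the paper: observe that $I_{S,l}$ is stable, invoke the Eliahou--Kervaire minimal free resolution supported on admissible symbols, and then apply equation~(\ref{eq:HilbertFormula}) with the substitution $\xb^{\mu}\mapsto pr(\mu)$. The paper's own proof is simply a terser version of exactly this argument.
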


\begin{proof}
The $l$-reliability ideal of $S$ is stable. The minimal free resolutions of stable ideals are given in \cite{EK90}, which are supported on the admissible symbols of $I_l$. Then, by equation (\ref{eq:HilbertFormula}) we have the claimed form of the $l$-reliability of the system, $R_l(S)$.
\end{proof}

\begin{example}\label{ex:multi-state-stable}
Let $S$ be a multi-state system with three components such that the ideal corresponding to level $l=2$ is 
\[
I_{2,S}=\langle x_1^2,x_1x_2,x_2^2,x_1x_3^2 \rangle,
\]
which is a stable ideal. The list of admissible symbols for $I_{2,S}$ is 
\begin{align*}
&\{[x_1^2;\emptyset], [x_1x_2;\emptyset],[x_2^2;\emptyset],[x_1x_3^2;\emptyset],\\
&[x_1x_2;\{1\}],[x_2^2;\{1\}],[x_1x_3^2;\{1\}],[x_1x_3^2;\{2\}],[x_1x_3^2;\{1,2\}]\}
\end{align*}
and hence the numerator of the Hilbert series of $I_2(S)$ is
\begin{align*}
HN(I_{2,S})&=x_1^2+x_1x_2+x_2^2+x_1x_3^2\\
&-(x_1^2x_2+x_1x_2^2+x_1^2x_3^2+x_1x_2x_3^2)+x_1^2x_2x_3^2.
\end{align*}
Assuming the probabilities $p_{i,j}=pr(x_i\geq j)$ of component $i$ to be working at level at least $j$ are given~by
\begin{align*}
&p_{1,1}=0.9,\, p_{1,2}=0.8,\, p_{2,1}=0.85,\, \\
&p_{2,2}=0.8,\, p_{3,1}=0.75,\, p_{3,2}=0.7,
\end{align*}
then the $2$-reliability of the system, i.e. the probability that the system is operating at level at least $2$, is
\begin{align*}
R_{2}(S)=&0.8{+}0.9{\cdot}0.85{+}0.8{+}0.9{\cdot}0.7{-}(0.8{\cdot}0.85\\
+&0.9{\cdot}0.8{+}0.8{\cdot}0.7{+}0.9{\cdot}0.85{\cdot}0.7){+}0.8{\cdot}0.85{\cdot}0.7\\
=&0.9755.
\end{align*}
\end{example}

\begin{remark}
Stable and strongly stable ideals are very important objects in commutative algebra and have been object of intense research. Besides the seminal paper \cite{EK90} that explicitly describes the minimal free resolution of stable ideals, we refer the reader to \cite{FMS11} for a deeper study of these ideals.
\end{remark}

\subsubsection{Binary systems}
A resolution of the type described above for squarefree stable ideals was given in \cite{AHH97,AHH98}. In the squarefree case the admissible symbols for an ideal $I$ are those $[m,u]$ such that $m$ is a minimal monomial generator and $u$ is a sequence $u$ is an increasing set of variables such that $\max(u)<\max(m)$ and no $u_i$ in $u$ divides $m$. The minimal free resolution of $I$ is then supported on the admissible symbols for $I$.

\begin{example}
Let $S$ be a binary system with four components whose reliability ideal is 
\[
I_S=\langle x_1x_2,x_1x_3,x_1x_4,x_2x_3\rangle.
\]
The ideal $I_S$ is squarefree strongly stable with  admissible symbols:
\begin{align*}
&[x_1x_2;\emptyset], [x_1x_3;\emptyset],[x_1x_4;\emptyset],[x_2x_3;\emptyset],[x_1x_3;\{2\}],[x_1x_4;\{2\}],\\
&[x_1x_4;\{3\}],[x_2x_3;\{1\}],[x_1x_4;\{2,3\}].
\end{align*}
Hence, the reliability of the system is given by
\begin{align*}
R(S)=&p_1p_2+p_1p_3+p_1p_4+p_2p_3\\
-&(2p_1p_2p_3+p_1p_2p_4+p_1p_3p_4)+p_1p_2p_3p_4,
\end{align*}
where $p_i$ is the working probability of component $i$.

\end{example}

\subsection{Multiplicative variables and involutive bases for stable and squarefree stable ideals.}

For any involutive division that has the property of being constructive (this is a technical requirement that both the Pommaret and Janet divisions satisfy), Seiler gives in \cite{S09a} a completion algorithm, which given a generating set of $I$, produces an involutive basis of the ideal. In the case of the Pommaret division and monomial ideals, we need the ideal $I$ to be quasi-stable for the algorithm to terminate. In this case, the set of Janet-multiplicative variables and Pommaret-multiplicative variables coincide for any monomial $\xb^\mu$ in the involutive basis. In case the ideal $I$ is not quasi-stable, then the set of Pommaret-multiplicative variables is always included in the set of Janet-multiplicative variables for every monomial in the involutive basis. This inclusion is strict for some monomials.

A relevant result for stable ideals, and the one that justifies the name {\em quasi-stable} for ideals possessing a finite Pommaret basis, is the following.

\begin{proposition}[\cite{S09b}, Proposition 8.6] \label{prop:stableIdeal}
A monomial ideal $I$ is stable if and only if its minimal monomial generating set is also a Pommaret basis for $I$.
\end{proposition}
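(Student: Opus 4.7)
The plan is to analyze both directions through the Pommaret class $\cls$ and the multiplicative variables $\Xk_P(h) = \{x_{\cls(h)}, \ldots, x_n\}$, reducing each implication to a case analysis on the minimal generating set $G$ of $I$.

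For the implication $(\Leftarrow)$, I would take $m \in I$ with $k := \cls(m)$ and $j < k$, write $m = g \cdot x^\alpha$ via its involutive Pommaret decomposition (which exists because $G$ is a Pommaret basis), and split on whether $x_k$ divides $x^\alpha$. If $x_k \mid x^\alpha$, then $\frac{x_j m}{x_k} = (x_j g) \cdot (x^\alpha / x_k)$ is a multiple of $g$, hence in $I$. Otherwise, the support condition on $\alpha$ (inside $\{\cls(g),\ldots,n\}$) combined with $\cls(m) = k$ forces $\alpha = 0$ and $\cls(g) = k$, so $m = g$, and the core task reduces to proving $\frac{x_j g}{x_k} \in I$ for each $g \in G$ and $j < k = \cls(g)$. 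I would then apply the Pommaret basis property to $x_j g \in I$ to obtain an involutive decomposition $x_j g = g^\star \cdot x^\beta$ with $g^\star \in G$, and distinguish cases on the exponent of $x_k$ in $\beta$. A positive exponent immediately yields $\frac{x_j g}{x_k} = g^\star \cdot (x^\beta/x_k) \in I$; a zero exponent with $\cls(g^\star) < k$ contradicts the required total exponent of $x_k$ in $x_j g$; and the degenerate possibility $\beta = 0$ would force $g^\star = x_j g$, hence $g \mid g^\star$ with both in the minimal generating set $G$, contradicting minimality.

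For the implication $(\Rightarrow)$, I need to show both existence and uniqueness of an involutive Pommaret divisor in $G$ for every $m \in I$. For existence, the plan is to choose, among all $g \in G$ dividing $m$, one that minimizes the lexicographic pair $(\cls(g), \mu_{\cls(g)}(g))$ (where $\mu_i(g)$ denotes the exponent of $x_i$ in $g$), and derive a contradiction from $m/g \notin \kb[\Xk_P(g)]$. In that scenario, some $j < k := \cls(g)$ satisfies $x_j \mid m/g$; stability provides $g' := x_j g / x_k \in I$, and a brief check shows $g' \mid m$. Any $g'' \in G$ dividing $g'$ then divides $m$, and minimality of $G$ (excluding $g'' = g$, which would force $g \mid g'$ and hence $x_k \mid x_j$, impossible) shows that the pair $(\cls(g''), \mu_{\cls(g'')}(g''))$ strictly precedes $(\cls(g), \mu_k(g))$ in the lex order, contradicting the minimal choice of $g$. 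For uniqueness, given $m = g_1 x^{\alpha_1} = g_2 x^{\alpha_2}$ with $\cls(g_1) \leq \cls(g_2)$, the support conditions on the $\alpha_i$ force $g_i$ to coincide with $m$ on variables of index $< \cls(g_i)$; comparing the exponent at $\cls(g_1)$ then yields $g_1 \mid g_2$, and minimality of $G$ forces $g_1 = g_2$.

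The step I expect to be the main obstacle is the descent in the existence part of $(\Rightarrow)$, specifically verifying that the candidate $g''$ genuinely decreases the chosen lex measure in the sub-case $\cls(g'') = \cls(g') = k$. Here one must carefully use minimality of $G$ both to rule out $g'' = g$ and to conclude that the $x_k$-exponent of $g''$, bounded above by the $x_k$-exponent of $g'$ (which is exactly $\mu_k(g) - 1$), is strictly smaller than $\mu_k(g)$, so that the lex pair actually drops.
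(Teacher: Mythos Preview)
The paper does not give its own proof of this proposition; it is quoted verbatim as Proposition~8.6 from Seiler~\cite{S09b} and used as a black box. So there is nothing in the present paper to compare your argument against.

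That said, your proposal is a correct, self-contained elementary proof using only the definitions in the paper. Both directions go through as you describe. In $(\Leftarrow)$, the reduction ``$x_k\nmid x^\alpha$ forces $\alpha=0$ and $\cls(g)=k$'' is justified because $x_k\mid m$ then forces $x_k\mid g$, hence $\cls(g)\geq k$, while $g\mid m$ gives $\cls(g)\leq k$; this is worth spelling out explicitly. In $(\Rightarrow)$, your descent on the lexicographic pair $(\cls(g),\mu_{\cls(g)}(g))$ is sound: when $\cls(g'')=k$ you correctly use $\mu_k(g'')\leq\mu_k(g')=\mu_k(g)-1$ to get the strict drop. One small wording issue in the uniqueness argument: when $\cls(g_1)=\cls(g_2)$ you do not automatically get $g_1\mid g_2$; rather, one of the two divides the other (depending on which has the smaller $x_{\cls}$-exponent), and minimality of $G$ then forces equality either way. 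This is a cosmetic fix.

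Compared with Seiler's original treatment, which sits inside a broader framework of local involution and completion, your argument is more bare-hands: it avoids the general machinery and works directly with the Pommaret cones and the minimal generating set. The trade-off is that your proof is tailored to this one statement, whereas Seiler's approach situates it within a hierarchy (quasi-stable $\subset$ stable $\subset$ strongly stable) characterized uniformly via Pommaret bases.
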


Therefore, for stable and strongly stable multi-state systems, the computation of the reliability of the system using the algebraic version of the Sum of Disjoint Product method described in Section \ref{sec:algebraic} is straightforward. Since the reliability ideals of these systems are stable, Proposition \ref{prop:stableIdeal} tells us that all we need is to compute the sets of multiplicative and non-multiplicative variables to obtain the reliability of the system. Since for these ideals the Janet and Pommaret multiplicative variables coincide, we use the simpler one to compute, namely the Pommaret multiplicative variables.

\begin{example}
The ideal $I_{2,S}$ in Example \ref{ex:multi-state-stable} is a stable ideal, hence its minimal generating set is itself a Pommaret basis. The  Pommaret multiplicative variables for any monomial $m$ are easy to compute, for they are the set $\{x_{\max(m)},\dots,x_n\}$. The sets of non-multiplicative variables for the generators of this ideal are
\begin{align*}
&\bar{\Xk}(x_1^2)=\emptyset,\, \bar{\Xk}(x_1x_2)=\{x_1\},\\
&\,\bar{\Xk}(x_2^2)=\{x_1\},\,\bar{\Xk}(x_1x_3^2)=\{x_1,x_2\}.
\end{align*}
Hence the $2$-reliability of this system is given by
\begin{align*}
R_2(S)=&p_{1,2}+\widehat{p}_{1,1}p_{2,1}+\widehat{p}_{1,0}p_{2,2}+\widehat{p}_{1,1}\widehat{p}_{2,0}p_{3,2}\\
=&0.8+0.1\cdot0.85+0.1\cdot0.8+0.1\cdot0.15\cdot0.7\\
=&0.9755,
\end{align*}
which is the same result obtained with the IIE method in Example \ref{ex:multi-state-stable}.
\end{example}

In the squarefree case, i.e. for binary systems, the situation seems a bit more difficult, for these ideals are almost never quasi-stable. In this case, we need to use Janet bases. We have, however, a result analogous to Proposition \ref{prop:stableIdeal}. We provide here the proof of this result and refer to \cite{IS24} for more details on monomial ideals whose minimal generating set is a Janet basis.

\begin{proposition}[\cite{IS24}, Theorem 3.2]
Let $I\subseteq R$ be a squarefree stable monomial ideal, then its minimal generating set is a Janet basis for $I$. 
\end{proposition}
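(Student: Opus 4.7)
The plan is to verify that $G$, the minimal generating set of $I$, satisfies the covering property for the Janet division with respect to $G$: every monomial of $I$ lies in the Janet cone $g\cdot\kb[\Xk_{J,G}(g)]$ of some $g\in G$. Since the Janet division produces pairwise non-overlapping cones by construction (property~(1) of Definition~\ref{def:division}), covering is equivalent to $G$ being a Janet basis. Two immediate reductions simplify the task: indices $i$ with $g_i=1$ are automatically Janet-multiplicative for $g$, and the radical $\sqrt m$ of any $m\in I$ has the same support as $m$ and again lies in $I$, so it suffices to prove the covering property for squarefree $m$.

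The argument then proceeds by induction on the number of variables $n$, the case $n=1$ being immediate. For the inductive step, split $G=G_0\sqcup G_1$ with $G_0=\{g\in G:x_1\nmid g\}$ and $G_1=\{g\in G:x_1\mid g\}$, and write $G_1/x_1=\{g/x_1:g\in G_1\}$. Set $I_0=\langle G_0\rangle$ and $I_1=\langle G_1/x_1\rangle$ as ideals in $\kb[x_2,\dots,x_n]$. The crucial preparatory step is to verify that $I_0$ and $I_1$ are squarefree stable with minimal generating sets $G_0$ and $G_1/x_1$. Stability of $I_0$ is inherited directly from $I$. For $I_1$, I lift a proposed reduction $\frac{x_j h}{x_{\max h}}$ (with $h=g/x_1$, $j\geq 2$, $j<\max h$, $x_j\nmid h$) to $\frac{x_j g}{x_{\max g}}\in I$ and argue that a $G_1$ divisor exists: if only a $G_0$ divisor $g_0$ were present, applying squarefree stability to $g_0$ with $j'=1$ would yield $\frac{x_1 g_0}{x_{\max g_0}}\in I$, whose minimal divisor must lie in $G_1$ (a $G_0$ divisor would be a proper divisor of $g_0$, contradicting minimality), and a support computation shows that this $G_1$ generator also divides $\frac{x_j g}{x_{\max g}}$.

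With the sub-ideals in hand, a direct computation shows that Janet multiplicative variables split as expected: for $g\in G_0$ index $1$ is multiplicative in $G$ iff $G_1=\emptyset$; for $g\in G_1$ index $1$ is automatically multiplicative; and for $k\geq 2$ the Janet status of $k$ for $g$ in $G$ equals its status inside $G_0$ or inside $G_1/x_1$. By the inductive hypothesis, $G_0$ and $G_1/x_1$ are Janet bases of $I_0$ and $I_1$. Given a squarefree $m\in I$ with $x_1\nmid m$, induction applied to $I_0$ covers $m$ by some $g\in G_0$, and the resulting $x_1$-free cone transfers to $\kb[x_1,\dots,x_n]$ automatically. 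If $x_1\mid m$, the same lift-through-$j=1$ argument shows that some $g\in G_1$ must divide $m$; consequently $m/x_1\in I_1$ is covered by induction using some $g_1/x_1\in G_1/x_1$, and multiplying by $x_1$ (which is multiplicative for $g_1$) covers $m$ by $g_1$ in $G$.

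The main obstacle is the verification that $I_1$ is squarefree stable together with the parallel claim that every squarefree $m\in I$ divisible by $x_1$ admits a $G_1$ divisor. Both rely on the same trick: iteratively applying the squarefree stability of $I$ at the variable $x_1$ and using the minimality of elements of $G$ to promote any $G_0$ divisor that appears into a genuine $G_1$ divisor.
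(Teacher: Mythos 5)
Your argument is correct, but it follows a genuinely different route from the paper's. The paper invokes the general theory of involutive divisions: since the Janet division is continuous and constructive, it suffices to verify \emph{local} involutivity, i.e.\ that for each minimal generator $h$ and each non-multiplicative index $i$ the prolongation $x_i h$ admits an involutive divisor inside the generating set, and this divisor is then produced in one line from the stability exchange property. You instead prove the \emph{covering} property globally and from scratch, by induction on the number of variables, splitting $G$ into the generators divisible by $x_1$ and those that are not, checking that the two derived ideals $I_0$ and $I_1$ are again squarefree stable (the promotion of a $G_0$-divisor to a $G_1$-divisor via the exchange at $j=1$ and minimality is the right mechanism, and your support computations go through since everything is squarefree), and observing that the Janet assignment of multiplicative indices for $k\ge 2$ restricts coherently to $G_0$ and $G_1/x_1$ -- which is exactly the recursive structure built into the Janet division. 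What your approach buys is self-containedness: you never appeal to the (nontrivial) theorem that local involutivity implies involutivity for continuous constructive divisions, and you end up constructing the disjoint cone decomposition of $\mathrm{mon}(I)$ explicitly, which is precisely what the sum-of-disjoint-products application consumes. What it costs is length and a number of routine verifications (the reduction to squarefree witnesses via radicals, minimality and stability of $G_0$ and $G_1/x_1$, the edge cases $G_0=\emptyset$, $G_1=\emptyset$, $x_1\in G$) that the paper's one-step local check avoids entirely. Both proofs are sound; yours is the more elementary and more informative, the paper's the more economical.
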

\begin{proof}
    Let $I$ be a square-free stable ideal and $G(I)$ its minimal generating set. Since the Janet division is continuous and constructive, then it suffices to show that for any $h\in G(I)$ and $x_i\in\overline{\mathcal{X}}_{J,G(I)}(h)$ there is a Janet involutive divisor of $x_i h$ in $G(I)$.

    Since $I$ is quasi-stable, we have that $h'=x_i\frac{h}{x_{\min(h)}}$ is in $I$, and $h'$ is an involutive divisor of $x_ih$, since $\frac{x_ih}{h'}=\min(h)$, which is smaller or equal than $\min(h')$ and hence multiplicative for it. Hence we do no need to add $x_ih$ to complete $G(I)$ to an involutive basis, and we are done.
\end{proof}

The computation of Janet multiplicative variables is not as straightforward as for the Pommaret division. There are, however, efficient algorithms for their computation, based on the so-called Janet tree structure, cf. \cite{GBY01,S10,C19}. Once the sets of multiplicative variables of the generators of $I_S$ are computed, we can directly obtain the reliability of the system $S$.

\subsection{Algebraic importance measures for stable systems}\label{sec:algebraicImportance}
In \cite{SW15}, an algebraic alternative to structural importance is given, based on the Hilbert function of its reliability ideal. Let $S$ be a system with $n$ components such that each component can be in states $\{0,1,\dots,M_i\}$, and $I_{S,l}$ its $l$-reliability ideal (in this Section we denote it by $I_l$ if the system $S$ is clear from the context). Let $\widehat{I}_l$ be the Artinian closure of $I_l$, i.e. $\widehat{I}_l=I_l+\langle x_1^{M_1+1}, \dots, x_n^{M_n+1}\rangle$, in the general case -i.e. for any monomial ideal $I$ not necessarily coming from a coherent system-, to obtain $\widehat {I}$ the exponents $M_i$ are given by the highest exponent to which $x_i$ is raised in any minimal generator of $I$. The ideal $\widehat{I}_l$ is a zero-dimensional ideal, which means that the number of monomials not in $\widehat{I}_l$ is finite. For any zero-dimensional ideal, the number of monomials not in it is called the {\em multiplicity} of the ideal. For instance, the multiplicity of the ideal $\langle x_1^{M_1+1}, \dots, x_n^{M_n+1}\rangle$ is equal to $N=\prod_{i=1}^n {M_i+1}$. In the context of reliability ideals of coherent systems, we define the {\em algebraic multiplicity} (or simply multiplicity) of component $c_i$, denoted by $\mult(c_i)$ as the multiplicity of the ideal $\widehat{I}_{l,\overline{i}}$ which is generated by the monomials $\{\frac{\mu}{x_i^\infty}\mbox{ s.th. }\mu \mbox{ is a monomial generator of } \widehat{I}_l\}$ where $\frac{\mu}{x_i^\infty}$ means that we have deleted variable $i$ from $\mu$.

\begin{definition}
The {\em multiplicity importance} for level $l$ of component $c_i$ of system $S$ is the number $imp_\mult(c_i)=N-\mult(\widehat{I}_{l,\overline{i}})$.
\end{definition}

Observe that the multiplicity importance for level $l$ of a component is inversely proportional to the multiplicity of its associated ideal $\widehat{I}_{l,\overline{i}}$. In the case of binary systems, the multiplicity importance (for level $1$) is equivalent to the structural importance, see \cite{SW15}. In the case of multi-state systems we have a measure of multiplicity importance for each level $l$. The interpretation of the importance of each component is then more subtle, since a component $c_i$ could have higher importance than another component $c_j$ for certain levels and the situation can be the opposite for other levels. 

In the case of strongly stable systems, the ordering of the variables is equivalent to the ordering based on multiplicity importance.

\begin{theorem}
Let $I\subseteq \R=\kb[x_1,\dots,x_n]$ be a strongly stable squarefree ideal. If $i<j$ then $\mult(\widehat{I}_{\overline{i}})\leq\mult(\widehat{I}_{\overline{j}})$.
\end{theorem}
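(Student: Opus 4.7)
\medskip

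\noindent \textbf{Proof plan.} My plan is to construct an explicit injection from the standard monomials of $\widehat{I}_{\overline{i}}$ into the standard monomials of $\widehat{I}_{\overline{j}}$, which yields the desired inequality since for a zero-dimensional monomial ideal the multiplicity equals the number of standard monomials. The first observation is that the squarefree monomials $m$ with $x_k \nmid m$ lying outside $\widehat{I}_{\overline{k}}$ are exactly those with $x_k m \notin I$, since a squarefree generator $g$ of $I$ with $x_k$ deleted divides such an $m$ if and only if $g$ itself divides $m x_k$. The task therefore reduces to producing an injection from
\[
A_i = \{m \text{ squarefree} : x_i \nmid m,\ x_i m \notin I\}
\]
into the analogously defined $A_j$.

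I would then define $\phi \colon A_i \to A_j$ by
\[
\phi(m) = \begin{cases} x_i m / x_j & \text{if } x_j \mid m,\\ m & \text{if } x_j \nmid m. \end{cases}
\]
In both cases $\phi(m)$ is squarefree and not divisible by $x_j$, and injectivity is immediate: whether or not $x_i$ divides $\phi(m)$ distinguishes the two cases, and $m$ is recovered by the obvious inverse.

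The only nontrivial step, and the main obstacle, is verifying that $x_j \phi(m) \notin I$. When $x_j \mid m$ this is automatic, since $x_j \phi(m) = x_i m \notin I$. When $x_j \nmid m$, I would argue by contradiction: suppose some squarefree generator $g$ of $I$ divides $x_j m$. If $x_j \nmid g$, then $g \mid m$ and hence $g \mid x_i m$, contradicting $x_i m \notin I$. If $x_j \mid g$, I would first rule out $x_i \mid g$ (otherwise $x_i \mid x_j m$ forces $x_i \mid m$), and then apply squarefree strong stability to $g$ with $i < j$ and $x_i \nmid g$ to obtain $x_i g / x_j \in I$. Since $x_i g / x_j$ divides $x_i m$, this again contradicts $x_i m \notin I$. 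This swap is precisely where the strong stability hypothesis is indispensable; once it is established, $\mult(\widehat{I}_{\overline{i}}) = |A_i| \leq |A_j| = \mult(\widehat{I}_{\overline{j}})$ follows immediately.
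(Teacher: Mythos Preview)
Your argument is correct, and it takes a genuinely different route from the paper's proof. The paper reduces (without loss of generality) to the case where $I$ is equigenerated in some degree $d$, then classifies the minimal generators of $I$ according to which of $x_i,x_j$ divide them and compares, type by type, what each generator contributes to $\widehat{I}_{\overline{i}}$ versus $\widehat{I}_{\overline{j}}$; the conclusion is drawn from the observation that $\widehat{I}_{\overline{i}}$ acquires at least as many degree-$(d-1)$ generators as $\widehat{I}_{\overline{j}}$. Your approach instead works directly with the standard monomials, identifies them with the sets $A_i$ and $A_j$, and builds an explicit injection $\phi:A_i\to A_j$ via the swap $m\mapsto x_i m/x_j$ on the part where $x_j\mid m$. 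The two arguments invoke strong stability at the same moment---to pass from a generator (or divisor) involving $x_j$ to one involving $x_i$---but yours is more self-contained: it needs no equigenerated reduction, and the bijection-style bookkeeping makes the inequality $|A_i|\le|A_j|$ immediate rather than inferred from a comparison of generator degrees. The paper's approach, on the other hand, stays closer to the generating sets and gives some structural insight into how $\widehat{I}_{\overline{i}}$ and $\widehat{I}_{\overline{j}}$ differ.
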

\begin{proof}
Without loss of generality, we can consider that the ideal $I$ is equi-generated, i.e. all generators are of the same degree, say $d$. It is enough to check the minimal generators of $I$. For each generator $m$ of $I$ we can be in one of these situations:
\begin{enumerate}
    \item $x_i$ does not divide $m$ but $x_j$ does,
    \item both $x_i$ and $x_j$ divide $m$,
    \item $x_i$ divides $m$ but $x_j$ does not,
    \item none of $x_i$ and $x_j$ divide $m$.
\end{enumerate}

Observe that since $I$ is strongly stable, for each generator in situation $(1)$, there is another generator in situation $(3)$, therefore it is enough to observe situations $(2)$, $(3)$ and $(4)$.

If $m$ is in situation $(2)$ then we have that $m/x_i\in \widehat{I}_{\overline{i}}$ and $m/x_j\in \widehat{I}_{\overline{j}}$, both elements are of degree $d-1$ and they do not divide each other, hence, every $m$ of type $(2)$ contributes with one degree $d-1$ element to both $\widehat{I}_{\overline{i}}$ and $\widehat{I}_{\overline{j}}$.

If $m$ is in situation $(3)$ then $m/x_i\in \widehat{I}_{\overline{i}}$ and $m\in \widehat{I}_{\overline{j}}$ and since $m/x_i$ divides $m$, generators of type $(3)$ contribute to more generators of smaller degree to $\widehat{I}_{\overline{i}}$ than to $\widehat{I}_{\overline{j}}$.

Finally, if $m$ is of type $(4)$, then $m$ is in both $\widehat{I}_{\overline{i}}$ and $\widehat{I}_{\overline{j}}$.

Putting together the information of these four types of generators, we have that $\widehat{I}_{\overline{i}}$ has at least as many generators of degree $d-1$ as $\widehat{I}_{\overline{j}}$, and hence, its multiplicity is smaller than or equal than that of $\widehat{I}_{\overline{j}}$.
\end{proof}

\begin{example}
Let us consider the ideal $I=\langle ab,ac,bc,cd\rangle$ from Example \ref{ex:fourcomponents}. It is squarefree strongly stable with respect to the order $c<a<b<d$.
The ideals corresponding to deletion of each of the variables are the following:
\[
\widehat{I}_{\overline{a}}=\langle b,c,d^2\rangle,\, \widehat{I}_{\overline{b}}=\langle a,c,d^2\rangle,\, \widehat{I}_{\overline{c}}=\langle a,b,d\rangle,\, \widehat{I}_{\overline{d}}=\langle a^2,b^2,ab,c\rangle
\]
Their multiplicities are: 
\[
\mult(\widehat{I}_{\overline{a}})=2,\,\mult(\widehat{I}_{\overline{b}})=2,\,\mult(\widehat{I}_{\overline{c}})=1,\,\mult(\widehat{I}_{\overline{d}})=3.
\]
Hence, the ordering by multiplicity importance of the components corresponding to the variables of this ideal is $(c,a,b,d)$ or $(c,b,a,d)$, which are the orderings for which $I$ is strongly stable.
\end{example}

\section{Computer experiments and examples}\label{sec:experiments}
The problem of computing system reliability is NP-hard, therefore the implementation of efficient algorithms is key to obtain good results in actual applications. There is a great variety of algorithms for system reliability computation, see for instance \cite{TB17}, which are generally divided into two categories. On the one hand,  there are general algorithms, making use of mathematical concepts or efficient structures to encode the systems' states in order to avoid as much redundancy as possible. These include Binary Decision Diagrams, Sum of Disjoint Products or Universal Generating Functions among others. The other main approach is to construct specific algorithms for particular classes of systems. Some of the systems more frequently studied in this respect are series-parallel systems, $k$-out-of-$n$ systems both binary and multi-state and its variants, or networks, among others.

The algebraic methodology for reliability computation using monomial ideals falls into both of these two categories. On the one hand, it gives algebraic versions of general approaches: compact Inclusion-Exclusion formulas, and Sum of Disjoint Products. On the other hand, the method can be adapted to particular systems providing efficient specific algorithms \cite{SW10,MSW18,PSW21,PS21}. The algebraic approach is based, like others, on avoiding as much redundancy as possible when enumerating the states needed for the final reliability computation. In the case of compact Inclusion-Exclusion formulas, this is provided by the possibility of using different resolutions to express the numerator of the Hilbert series of the system's ideals. In this respect, a fast computation of the minimal resolution or close-to-minimal resolutions is of paramount importance to our approach. This methodology can be approached as a recursive procedure, computing the Hilbert series of an ideal in terms of the Hilbert series of smaller ideals. Recursion is usually very efficient in reliability computations and is used in other methodologies, such as the Universal Generating Function method \cite{L05}, factoring methods \cite{KZ03,TB17} or ad-hoc methods for particular systems, see \cite{MLAD15} for instance. In the case of the algebraic Sum of Disjoint Products, redundancy is avoided via efficient computation of involutive bases \cite{IPS22}.
The algebraic methods for computing system reliability based on monomial ideals are implemented in the \verb|C++| library \verb|CoCoALib| \cite{cocoalib} by means of an ad-hoc class described in \cite{BPS21}. 

We describe here three examples of implementation of the methods given in the previous sections, to demonstrate that the reliability of stable systems (among others) can be efficiently computed using the algebraic approach. All computations were performed in a Macintosh laptop with an \verb|M1| processor, and \verb|8GB RAM|.

\subsection{Binary system with multi-state components}
In our first experiment the goal is to compare the algebraic versions of Improved Inclusion-Exclusion formulas and Sum of Disjoint Products. We study a class of binary systems with multi-state components. Let $S$ be a system with $n$ components, each of which can be in $M+1$ levels of performance, $0,1,\dots,M$. The system works if at least $k$ components are working at level at least $1$ or if any of the components is working at level $M$. These systems are not stable unless $M=1$, but their corresponding reliability ideals are quasi-stable. Table \ref{table:example_k_n_plus_m} describes the results of performing the necessary computations for obtaining the reliability of these systems using the two algebraic methods mentioned in Section \ref{sec:algebraic}. We observe that when the maximal exponent of the elements of the ideal (i.e. the maximal level of the components) is low, then the involutive approach, i.e. sum of disjoint products offers better performance. However, when we have high levels, the resolution approach, i.e. improved inclusion-exclusion, performs better. In the table, column {\em size res.} indicates the number of terms of the inclusion-exclusion formula obtained using a free resolution, column {\em size inv.} indicates the size on the involutive basis of the ideal, and the last two columns indicate the time in seconds used to compute the corresponding resolutions and involutive bases respectively.
\begin{table}[h]
  \label{table:example_k_n_plus_m}
  \begin{tabular}{cccllrr}
    \toprule
    n&k&M&size res.&size inv.&time res.(s)& time inv.(s)\\
    \midrule
    10&2&2&9217&55&0.0285&0.0046\\
    10&2&6&9217&235&0.0284&0.0157\\
    10&4&2&215853&385&0.3329&0.0508\\
    10&4&6&86107&29485&0.1851&8.0373\\
    15&2&2&458753&120&0.6806&0.0081\\
    15&2&6&458753&540&0.6792&0.0338\\
    15&4&2&44759722&1940&90.7032&1.2330\\
    15&4&6&11927763&182540&22.2200&257.3160\\
      \bottomrule
\end{tabular}
 \caption{Size and time to compute a resolution and involutive basis for systems with $n$ components that work whenever $k$ work at level at least $1$, or at least one of them is working at level $M$. } 
\end{table}
\subsection{Improved Inclusion-Exclusion for stable systems}
In this experiment we compare the performance of the usual algebraic algorithm for IIE formulas based on Mayer-Vietoris trees \cite{S09}, which has a good general performance \cite{BPS21}, with an implementation of the Eliahou-Kervaire resolution for stable ideals, corresponding to stable systems. Since stable and strongly stable ideals are well studied objects in commutative algebra and algebraic geometry, we will use the algebraic geometry system \verb|Macaulay2| \cite{M2} to generate examples. We used the \verb|Macaulay2| package \verb|StronglyStableIdeals| \cite{AL19} that computes all strongly stable ideals with a given Hilbert polynomial. In our example, we used ideals in $20$ variables and used the mentioned package to generate all strongly stable ideals such that their Hilbert polynomial is $t^2+5t+2$. This is a set of $636$ ideals in $20$ variables. For each of these ideals we computed the minimal free resolution, i.e. the information needed to construct the algebraic improved inclusion-exclusion formulas in two different ways. One is using the Mayer-Vietoris tree implementation, and the second one is using the Eliahou-Kervaire symbols, see Section \ref{sec:resolutionStable}.

Figure \ref{fig:stronglyStableIdealsLog} shows the times taken by both algorithms to compute the ranks of the free resolution of these ideals, versus the size of the resolution i.e. the total sum of the ranks in it, which is the number of summands in the improved inclusion-exclusion formula. We see that the Eliahou-Kervaire approach is faster for this kind of ideals. One can also see that while the performance of the MVT algorithm has a very strict dependence on the size of the resolution, the Eliahou-Kervaire algorithm shows a lower slope and some variability.

\begin{figure}
\centering
\includegraphics[scale=0.5]{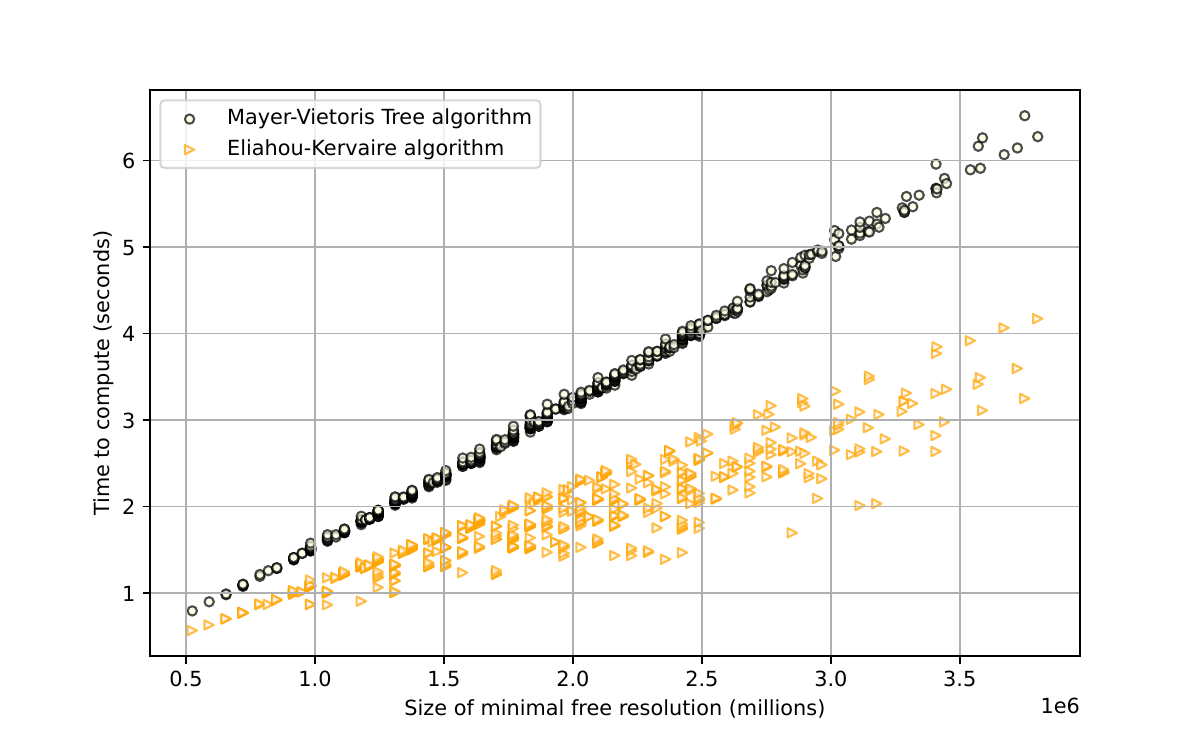}
\caption{Times taken by Mayer-Vietoris trees and Eliahou-Kervaire resolution algorithms to compute the ranks of the minimal free resolution of the $636$ strongly stable ideals in $20$ variables that have Hilbert polynomial $t^2+5t+2$.}
\label{fig:stronglyStableIdealsLog}
\end{figure}

\subsection{Binary $k$-out-of-$n$ and variants}
In this experiment we consider binary $k$-out-of-$n$  systems, and variants (consecutive linear and cyclic $k$-out-of-$n$ systems). Usual $k$-out-of-$n$ systems are strongly stable. For them, the Janet bases coincides with the minimal generating set, hence the size of their Mayer-Vietoris tree (and the Aramova-Herzog resolution) is typically much bigger than that of their involutive bases. The behavior of consecutive and cyclic $k$-out-of-$n$ systems is different in these respects. These are not stable systems, and therefore we cannot directly apply the Aramova-Herzog resolution. We can, however, still make use of Mayer-Vietoris trees and involutive bases. In this cases, the size of the Mayer-Vietoris tree is not big compared with the involutive basis, and therefore it will be the preferred method for computing reliability.

A comparison of the sizes of Mayer-Vietoris trees and Janet bases for $k$-out-of-$n$, consecutive $k$-out-of-$n$ and cyclic $k$-out-of-$n$ systems is given in Figure \ref{fig:k-out-of-n_variants}, for $n=10,15$ and $k=3,4,\dots,n$. The $y$-axis indicates, using a $\log$ scale, the ratio between the size of the Mayer-Vietoris tree with respect to the size of the corresponding Janet basis. The shaded region of the graph corresponds to the zone where this ratio is smaller than one, i.e. where the Janet basis is bigger than the Mayer-Vietoris tree. Observe that for the usual $k$-out-of-$n$ systems, which are stable, the Janet basis are much smaller than the corresponding Mayer-Vietoris trees.

\begin{figure}
\centering
\includegraphics[scale=0.4]{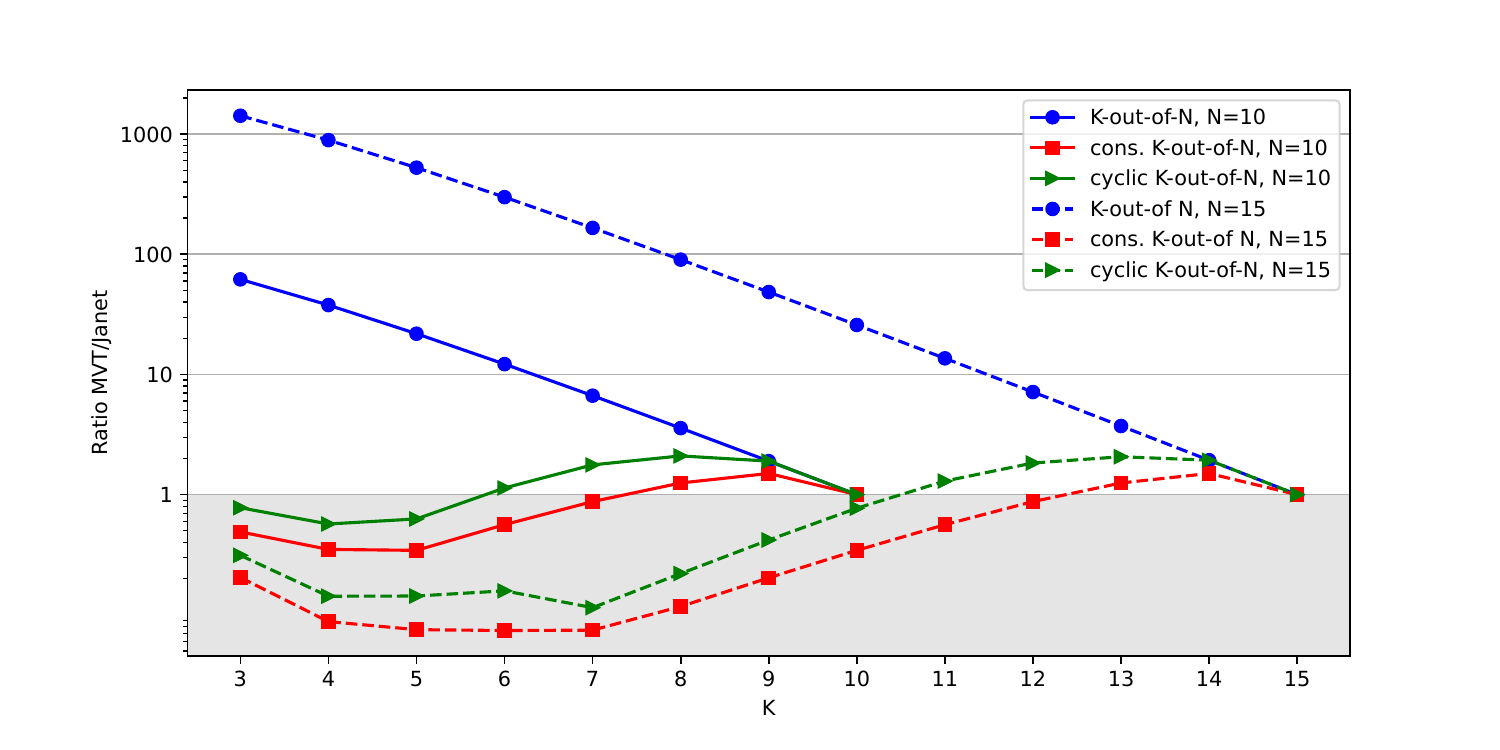}
\caption{Ratio between size of Mayer-Vietoris tree and Janet basis for k-out-of-n systems and variants (consecutive and cyclic). The shaded region is where the Janet basis are bigger than the Mayer-Vietoris trees.}
\label{fig:k-out-of-n_variants}
\end{figure}
\medskip

\noindent{\bf Acknowledgements.}
R. I, P. P-O, and E. S-d-C are partially supported by grant PID2020-116641GB-100 funded by MCIN/AEI/10.13039/501100011033. F.M. is supported by the UiT Aurora project MASCOT, KU Leuven grant iBOF/23/064, and FWO grants G0F5921N  (Odysseus) and G023721N.


\printbibliography 


%
%


%
%

\medskip
\noindent 
\small{\textbf{Authors' addresses}

\medskip\medskip  \noindent
Departamento de Matem\'aticas y Computaci\'on, Universidad de La Rioja, Spain\\
           E-mail address: {\tt rodrigo.iglesias@unirioja.es} 
           
\medskip  \noindent
Department of Computer Science, KU Leuven, Celestijnenlaan 200A, B-3001 Leuven, Belgium\\ 
   Department of Mathematics, KU Leuven, Celestijnenlaan 200B, B-3001 Leuven, Belgium\\  UiT – The Arctic University of Norway, 9037 Troms\o, Norway\\
   E-mail address: {\tt fatemeh.mohammadi@kuleuven.be}

\medskip  \noindent
Departamento de Matem\'aticas y Computaci\'on, Universidad de La Rioja, Spain\\
           E-mail address: {\tt patricia.pascualo@unirioja.es} 

\medskip  \noindent
Departamento de Matem\'aticas y Computaci\'on, Universidad de La Rioja, Spain\\
           E-mail address: {\tt esaenz-d@unirioja.es}

\medskip  \noindent
	  London School of Economics and the Alan Turing Institute, London, UK\\
	  E-mail address: {\tt h.wynn@lse.ac.uk}




\end{document}